\numberwithin{equation}{section}
\newtheorem{thm}{Theorem}[section]
\newtheorem{lemma}[thm]{Lemma}
\newtheorem{proposition}[thm]{Proposition}
\theoremstyle{definition}
\newtheorem{defin}[thm]{Definition}
\newcommand{\N}{\mathbb{N}}
\newcommand{\R}{\mathbb{R}}
\newcommand{\Z}{\mathbb{Z}}
\newcommand{\C}{\mathbb{C}}
\newcommand{\Cn}{\mathbb{C}^n}
\newcommand{\Sn}{\mathbb{S}^{2n-1}}
\newcommand{\la}{\lambda}
\newcommand{\sceil}[2]{\lceil #1 \rceil_{#2}}
\newcommand{\ind}{\mathbbm{1}}
\newcommand{\volSn}{\textnormal{vol}(\mathbb{S}^{2n-1})}
\newcommand{\boxb}{\square_b}
\newcommand{\boxN}{\square}
\newcommand{\hk}[2]{\mathcal{H}_{#1}(#2)}
\newcommand{\skipitems}[1]{%
	\addtocounter{\@enumctr}{#1}%
}
\title[A Tauberian Approach for the Kohn Laplacian]{A Tauberian Approach to Weyl's Law for the Kohn Laplacian on Spheres}
\author{Henry Bosch}
\address[Henry Bosch]{Harvard University, Department of Mathematics, Cambridge, MA 02138, USA}
\email{henrybosch@college.harvard.edu }
\author{Tyler Gonzales}
\address[Tyler Gonzales]{University of Wisconsin-Eau Claire, Department of Mathematics,
Eau Claire, WI 54701, USA}
\email{gonzaltj9215@uwec.edu}
\author{Kamryn Spinelli}
\address[Kamryn Spinelli]{Worcester Polytechnic Institute, Department of Mathematical Sciences, Worcester, MA 01609, USA}
\email{kpspinelli@wpi.edu}
\author{Gabe Udell}
\address[Gabe Udell]{Pomona College, Department of Mathematics,
Claremont, CA 91711}
\email{grua2017@mymail.pomona.edu}
\author{Yunus E. Zeytuncu}
\address[Yunus E. Zeytuncu]{University of Michigan--Dearborn, Department of Mathematics and Statistics,
Dearborn, MI 48128, USA}
\email{zeytuncu@umich.edu}
\thanks{This work is supported by NSF (DMS-1950102 and DMS-1659203). The work of the last author is also partially supported by a grant from the Simons Foundation (\#353525).}
\subjclass[2010]{Primary 32V05; Secondary 32V20}
\keywords{Kohn Laplacian, Weyl's Law, Karamata's Tauberian Theorem}
\begin{document}

\begin{abstract}
We compute the leading coefficient in the asymptotic expansion of the eigenvalue counting function for the Kohn Laplacian on the spheres. We express the coefficient as an infinite sum and as an integral.
\end{abstract}

\maketitle

\section{Introduction}

Let $\Sn$ denote the unit sphere in $\Cn$ where $n\geq 2$. The hypersurface $\Sn$ is an embedded CR manifold and the tangential Cauchy-Riemann operators $\overline{\partial}_b$ and $\overline{\partial}_b^*$ are defined on the corresponding Hilbert spaces. Furthermore, the Kohn Laplacian on $L^2(\Sn)$

$$\boxb=\overline{\partial}_b^*\overline{\partial}_b$$ 
is a linear self-adjoint densely defined closed operator. We refer to \cite{Boggess91CR} and \cite{CS01} for the detailed definitions. 

Our inspiration is the celebrated Weyl's law for Riemannian manifolds. In that setting, the eigenvalue counting function of the Laplace-Beltrami operator on a manifold $M$ has leading coefficient proportional to the volume of $M$ with a constant depending only on the dimension of $M$. There is also a corresponding result for the extension of this operator (also called the Hodge Laplacian, or the Laplace–de Rham operator) to differential forms. 

Motivated by the spectral theory for the Laplacian on Riemannian manifolds, one can investigate the spectrum of $\boxb$ on CR manifolds and its relation to the complex geometry of the underlying manifold. For example, in \cite{Fu2005} and \cite{Fu2008} Fu studied the spectrum of the $\overline{\partial}$-Neumann Laplacian $\boxN$ on smooth pseudoconvex domains $\Omega$ and related the distribution of the eigenvalues of $\boxN$ to the D'Angelo type of $b\Omega$. In \cite{Folland} Folland computed the spectrum of $\boxb$ on $\Sn$ on all differential form levels.  Recently, in \cite{REU17} and \cite{REU18} the authors used the spectrum of the Kohn Laplacian to prove the non-embeddability of the Rossi sphere.

For $\lambda>0$, let $N(\lambda)$ denote the number of positive eigenvalues (counting multiplicity) of $\boxb$ on $L^2(\Sn)$ that are less or equal to $\lambda$. It was noted in \cite{REU18} that $N(\lambda)$ grows on the order of $\lambda^n$ and later, in \cite{BanZey} (see the erratum), the leading coefficient in the asymptotic expansion was calculated as an infinite sum. In particular, \cite{BanZey} obtained
$$ \lim_{\lambda \to \infty} \frac{N(\lambda)}{\lambda^n} = \frac{1}{2^n n!}\sum_{q=1}^\infty \frac{1}{q^n} \bigg[\binom{n+q-2}{q}+{q-1 \choose n -2}\bigg] $$
by using careful counting arguments.

\subsection{Main Result}
In this paper, we obtain the same leading coefficient by another argument, namely by Karamata's Tauberian Theorem. We highlight that this technique is different than the one in \cite{BanZey}. Furthermore, we formulate the leading coefficient as a product of the volume of $\Sn$ and an integral that only depends on $n$. This representation resonates better with Weyl's law and is more amenable to generalization to other CR manifolds.

\begin{thm}\label{main}
Let $N(\lambda)$ be the eigenvalue counting function for $\boxb$ on $L^2(\Sn)$ as above. Then
\begin{align*}
\lim_{\lambda \to \infty} \frac{N(\lambda)}{\lambda^n} &= \frac{1}{2^n n!}\sum_{q=1}^\infty \frac{1}{q^n} \bigg[\binom{n+q-2}{q}+{q-1 \choose n -2}\bigg]\\
&=\volSn\frac{n-1}{n(2\pi)^n\Gamma(n+1)}\int_{-\infty}^{\infty}\left(\frac{t}{\sinh t}\right)^ne^{-(n-2)t}dt.
\end{align*}

\end{thm}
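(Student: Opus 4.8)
\section*{Proof proposal}

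The strategy is to route the eigenvalue count through the Laplace transform of the spectral measure and Karamata's Tauberian theorem. First I would assemble the spectral data. Since $\boxb$ commutes with the $U(n)$--action on $L^2(\Sn)$, it acts as a scalar $\lambda_{p,q}$ on each bidegree $(p,q)$ space of spherical harmonics $\mathcal{H}_{p,q}$, with $\lambda_{p,q}>0$ precisely when $q\ge 1$ (the kernel being the CR functions); the explicit eigenvalues and the multiplicities $\dim\mathcal{H}_{p,q}$ are known in closed form (Folland; see also \cite{BanZey}). Hence $N(\lambda)=\sum_{\lambda_{p,q}\le\lambda}\dim\mathcal{H}_{p,q}$, and I would introduce the partition function
\[
\Theta(s):=\int_{0^+}^{\infty}e^{-s\lambda}\,dN(\lambda)=\sum_{q\ge 1}\sum_{p\ge 0}\dim\mathcal{H}_{p,q}\,e^{-s\lambda_{p,q}},\qquad s>0.
\]

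The heart of the argument is the behaviour of $\Theta(s)$ as $s\to 0^+$. For fixed $q$ the inner sum is, after factoring out the $p$--independent part of the exponential, a finite combination of negative-binomial series $\sum_{p\ge 0}\binom{p+n-1}{n-1}x^p=(1-x)^{-n}$ with $x=e^{-s(\,\cdot\,)}$, so it evaluates to a rational function of $e^{-s(\,\cdot\,)}$ whose leading behaviour as $s\to 0^+$ is a constant multiple of $q^{-n}s^{-n}$. Summing over $q$ and justifying the interchange of the limit with the $q$--sum by dominated convergence---the quadratic-in-$q$ exponential factors suppress the tail, and the summands are dominated by a fixed multiple of $s^{-n}q^{-2}$---gives $\Theta(s)\sim C\,s^{-n}$ with $C=\tfrac{1}{2^n}\sum_{q=1}^{\infty}q^{-n}\bigl[\binom{n+q-2}{q}+\binom{q-1}{n-2}\bigr]$. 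I expect this step---carrying out the $p$--summation carefully, controlling the $q$--tail uniformly in $s$, and pinning down the constant $C$ (in particular accounting for both binomial contributions)---to be the principal obstacle.

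Given $\Theta(s)\sim C s^{-n}$, Karamata's Tauberian theorem applied to the positive measure $dN$ yields at once $N(\lambda)\sim \frac{C}{\Gamma(n+1)}\lambda^n=\frac1{2^n n!}\sum_{q=1}^{\infty}\frac1{q^n}\bigl[\binom{n+q-2}{q}+\binom{q-1}{n-2}\bigr]$, which is the first asserted form and recovers the result of \cite{BanZey}.

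It remains to identify this sum with the integral; this is a self-contained computation. I would write $q^{-n}=\frac1{\Gamma(n)}\int_0^\infty t^{n-1}e^{-qt}\,dt$, interchange (Tonelli), and sum the resulting power series in $e^{-t}$ using $\sum_{q\ge 0}\binom{n+q-2}{q}x^q=(1-x)^{-(n-1)}$ together with $\sum_{q\ge n-1}\binom{q-1}{n-2}x^q=x^{n-1}(1-x)^{-(n-1)}$, which collapses the integrand (up to constants) to $t^{n-1}\bigl[(1+e^{-(n-1)t})(1-e^{-t})^{-(n-1)}-1\bigr]$. Rewriting $1-e^{-t}=2e^{-t/2}\sinh(t/2)$ and $1+e^{-(n-1)t}=2e^{-(n-1)t/2}\cosh((n-1)t/2)$ and substituting $t=2u$ turns this into $\int_0^\infty u^{n-1}\bigl[\cosh((n-1)u)\,(2^{n-2}\sinh^{n-1}u)^{-1}-1\bigr]\,du$; integrating by parts once (the boundary terms vanish) and simplifying the derivative via $2e^{-(n-1)u}\sinh u=e^{-(n-2)u}-e^{-nu}$ converts it to $\frac{n-1}{n\,2^{n-2}}\int_0^\infty u^n\cosh((n-2)u)\sinh^{-n}u\,du$, which by evenness of $(u/\sinh u)^n$ equals $\frac{n-1}{n\,2^{n-1}}\int_{-\infty}^\infty (u/\sinh u)^n e^{-(n-2)u}\,du$. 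Tracking the constants through, using $\vol(\Sn)=2\pi^n/(n-1)!$ and $\Gamma(n+1)=n!$, produces exactly the stated expression; the integration-by-parts bookkeeping and the constant-chasing are the only delicate points, and neither is a serious obstacle.
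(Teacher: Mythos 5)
Your overall route---Karamata applied to the partition function $\Theta(s) = G(s) = \sum_{q\geq 1}\sum_{p\geq 0}\dim(\mathcal{H}_{p,q})\,e^{-s\lambda_{p,q}}$---matches the paper's, and your passage from the series to the integral is correct and genuinely different: writing $q^{-n} = \Gamma(n)^{-1}\int_0^\infty t^{n-1}e^{-qt}\,dt$, interchanging by Tonelli, and resumming the $q$-series gives the integrand $t^{n-1}\bigl[(1+e^{-(n-1)t})(1-e^{-t})^{-(n-1)}-1\bigr]$ immediately and bypasses the paper's Riemann-sum machinery (the scaled-ceiling estimates in Propositions \ref{leading-term} and \ref{Gt}). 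The remaining integration by parts and evenness manipulations then agree with the paper.

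The first half, however, has a real gap that your dominated-convergence sketch cannot fill. Fixing $q$ and summing over $p$ gives, with $x=e^{-2sq}$,
\[\sum_{p\geq 0}\dim(\mathcal{H}_{p,q})\,x^{p+n-1}=\frac{x^{n-1}}{(1-x)^n}\left[\binom{n+q-1}{q}-x\binom{n+q-2}{q-1}\right],\]
whose termwise limit, after multiplying by $s^n$ and using Pascal's rule, is $\binom{n+q-2}{q}/(2q)^n$---\emph{only} the first binomial. If dominated convergence held for the $q$-sum you would therefore obtain $\frac{1}{2^n}\sum_q \binom{n+q-2}{q}q^{-n}$ and miss the $\binom{q-1}{n-2}$ contribution entirely. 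In fact the bound you propose (a fixed multiple of $s^{-n}q^{-2}$) fails: along $s=1/q$ the inner sum is of size $q^{n-1}$, so $s^n$ times it is of order $q^{-1}$, which is not summable. Mass escapes to $q=\infty$ as $s\to 0^+$, and that escaping mass is precisely the second term. The paper's fix is the step your sketch omits: split $\dim(\mathcal{H}_{p,q})=\binom{n+p-1}{p}\binom{n+q-2}{q}+\binom{n+p-2}{p}\binom{n+q-2}{q-1}$ by a double Pascal identity, write $G=G_1+G_2$, sum $G_1$ over $p$ first (then dominated convergence in $q$ is justified and yields $\sum_q\binom{n+q-2}{q}(2q)^{-n}$), but sum $G_2$ over $q$ first and reindex with $w=p+n-1$ so that $\binom{n+p-2}{p}=\binom{w-1}{n-2}$, after which dominated convergence in $w$ yields $\sum_w\binom{w-1}{n-2}(2w)^{-n}$ (Propositions \ref{firstsum} and \ref{secondsum}). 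Without the decomposition and the change of summation order for $G_2$, the termwise limit strictly undershoots the true answer.
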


The integral representation is strikingly similar to (and indeed inspired by) a similar formula for the leading coeficient of the eigenvalue counting function for $\boxb$ acting on $(0,q)$-forms (where $q\geq 1$) in \cite{StanTar, Stanton}. However, we note that the expression in Theorem \ref{main} is not a special case of the expression in \cite{StanTar}. Stanton and Tartakoff's formula doesn't cover the case of functions, and some non-trivial adjustments are necessary to formulate the correct expression for the case $q=0$. We present a connection between our formula and the one in \cite{StanTar} in the last section. We list the task of finding the leading coefficient of the eigenvalue counting function of $\boxb$ acting on functions of a general pseudoconvex CR manifold of hypersurface type as an open problem.

\subsection{Ingredients} \label{sec:ingredients}
Before we present a proof of Theorem \ref{main} in the next section, we state some known facts about the eigenvalues of $\boxb$ on $\Sn$ and Karamata's Tauberian Theorem. 

The Kohn Laplacian acts on the space of $L^2$ differential forms on the sphere. In \cite{Folland}, Folland uses unitary representations to explicitly compute the eigenvalues and corresponding eigenspaces for the Kohn Laplacian on $(0,j)$-forms. In particular, he shows that eigenforms $$\bar{z_1}^{q-1} z_n^p \sum_{i=1}^{j+1} d\bar{z_1} \wedge \cdots \wedge \widehat{d\bar{z_i}} \wedge \cdots \wedge d\bar{z}_{j+1}$$ have corresponding eigenvalue $2 (q+j) (p+n-j-1)$, where the hat over a form indicates its exclusion from the wedge product.

We are interested in the case where $j=0$, corresponding to functions on the sphere. Folland (see also \cite{REU17, REU18}) explicitly shows that in this case
\[L^2(\Sn) = \bigoplus_{k=0}^\infty \hk{k}{\Sn} = \bigoplus_{p,q=0}^\infty \hk{p,q}{\Sn},\]
where $\hk{p,q}{\Sn}$ is the space of spherical harmonics of bidegree $p,q$.
Furthermore, each space $\hk{p,q}{\Sn}$ is an eigenspace of $\boxb$ with dimension
\[\textup{dim}(\mathcal{H}_{p,q}(\mathbb{S}^{2n-1}))=\binom{n+p-1}{p} \binom{n+q-1}{q}-\binom{n+p-2}{p-1}\binom{n+q-2}{q-1},\]
as computed by an inclusion-exclusion argument  (see \cite{Klima}). These spaces have corresponding eigenvalues $2q (p+n-1)$.

Karamata's Tauberian Theorem has been in used in \cite{Kac, StanTar} to understand the distribution of eigenvalues. We follow the statement in \cite[Theorem 1.1, page 57]{Arendt} where the reader can find further references.

\begin{thm}[Karamata]\label{karamata}
Let $\{\lambda_j\}_{j\in\mathbb{N}}$ be a sequence of positive real numbers such that $\sum_{j\in\mathbb{N}}e^{-\lambda_j t}$ converges for every $t>0$. Then for $n>0$ and $a\in\mathbb{R}$, the following are equivalent.
\begin{enumerate}
    \item $\lim_{t\rightarrow{0}^+}t^n\sum_{j\in\mathbb{N}}e^{-\lambda_jt}=a$
    \item $\lim_{\lambda\rightarrow{\infty}}\frac{N(\lambda)}{\lambda^n}=\frac{a}{\Gamma(n+1)}$
\end{enumerate}
where $N(\lambda)=\#\{\lambda_j:\lambda_j\leq\lambda\}$ is the counting function.
\end{thm}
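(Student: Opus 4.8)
The plan is to establish the equivalence of the two asymptotic statements by passing through the Laplace transform (equivalently, Stieltjes transform) of the counting measure and invoking Karamata's classical Tauberian theorem for regularly varying functions. The bridge is the identity
\begin{equation*}
\sum_{j\in\mathbb{N}}e^{-\lambda_j t}=\int_{0^-}^{\infty}e^{-\lambda t}\,dN(\lambda)=t\int_{0}^{\infty}e^{-\lambda t}N(\lambda)\,d\lambda,
\end{equation*}
valid for every $t>0$ by the assumed convergence of $\sum_j e^{-\lambda_j t}$, the monotonicity of $N$, and integration by parts (the boundary terms vanish since $N(\lambda)e^{-\lambda t}\to 0$ as $\lambda\to\infty$, again using convergence of the series). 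Writing $\theta(t):=\sum_{j}e^{-\lambda_j t}$, the goal becomes: $t^n\theta(t)\to a$ as $t\to 0^+$ if and only if $N(\lambda)/\lambda^n\to a/\Gamma(n+1)$ as $\lambda\to\infty$.

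First I would prove the direction (2)$\Rightarrow$(1), which is the genuinely elementary half. Assuming $N(\lambda)\sim \frac{a}{\Gamma(n+1)}\lambda^n$, one substitutes into $\theta(t)=t\int_0^\infty e^{-\lambda t}N(\lambda)\,d\lambda$, rescales by $u=\lambda t$, and obtains $t^n\theta(t)=\int_0^\infty e^{-u}\,u^n\cdot\frac{N(u/t)}{(u/t)^n}\,du$. The integrand converges pointwise to $\frac{a}{\Gamma(n+1)}e^{-u}u^n$ as $t\to 0^+$, and a domination argument — using that $N(\lambda)/\lambda^n$ is bounded on $(0,\infty)$ together with a crude bound near $\lambda=0$ where $N$ is bounded — lets us pass the limit inside via dominated convergence; since $\int_0^\infty e^{-u}u^n\,du=\Gamma(n+1)$, this yields $t^n\theta(t)\to a$. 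The reverse direction (1)$\Rightarrow$(2) is the substantive Tauberian statement: here one knows only the behavior of the Laplace transform $\int_0^\infty e^{-\lambda t}\,dN(\lambda)\sim a t^{-n}$ and must recover the behavior of $N$ itself. This is exactly Karamata's theorem (the Hardy–Littlewood–Karamata Tauberian theorem for Laplace transforms of nondecreasing functions): if $U$ is nondecreasing on $[0,\infty)$ with Laplace–Stieltjes transform $\omega(t)=\int_{0^-}^\infty e^{-\lambda t}\,dU(\lambda)$ finite for all $t>0$, and $\omega(t)\sim C t^{-\rho}$ as $t\to 0^+$ with $\rho\ge 0$, then $U(\lambda)\sim \frac{C}{\Gamma(\rho+1)}\lambda^\rho$ as $\lambda\to\infty$. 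Applying this with $U=N$, $\rho=n$, and $C=a$ gives $N(\lambda)\sim \frac{a}{\Gamma(n+1)}\lambda^n$, which is (2).

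For completeness I would indicate the mechanism of the Tauberian direction rather than merely cite it: the standard proof approximates the indicator $\mathbf{1}_{[0,1]}$ uniformly above and below by polynomials in $e^{-s}$ (Weierstrass approximation on $[0,1]$ after the change of variable $s\mapsto e^{-s}$), uses linearity of the Laplace transform to control $\int e^{-k\lambda t}\,dN(\lambda)=\theta(kt)$ for each integer $k\ge 1$, and then lets the approximating polynomials converge; the monotonicity of $N$ is precisely the Tauberian side condition that upgrades the transform asymptotics to a genuine asymptotic for $N$. The main obstacle — and the reason this is a Tauberian rather than an Abelian theorem — is that Laplace transform asymptotics do not in general determine the asymptotics of the underlying function without such a side condition; making the polynomial-approximation argument rigorous (uniform control of the error terms, interchange of limits, handling the mass of $dN$ near the origin) is the technical heart. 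Since the excerpt explicitly adopts this as Theorem \ref{karamata} following \cite[Theorem 1.1, page 57]{Arendt}, I would present the reduction via the integration-by-parts identity and the easy Abelian direction in detail, and cite \cite{Arendt} (or Karamata's original work, or Feller's treatment) for the hard direction, noting that $n$ need not be an integer and $a=0$ is permitted (in which case both statements assert $o(\cdot)$ growth).
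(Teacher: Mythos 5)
The paper does not prove this statement: it is quoted as a known result, with the proof deferred to \cite[Theorem 1.1, p.~57]{Arendt}, so there is no internal argument to compare against. Your outline is correct and is essentially the standard treatment found in that reference: the reduction $\sum_j e^{-\lambda_j t}=t\int_0^\infty e^{-\lambda t}N(\lambda)\,d\lambda$, the Abelian direction (2)$\Rightarrow$(1) by rescaling and dominated convergence (noting that convergence of the series forces $\lambda_j\to\infty$, so $N$ vanishes near $0$ and $N(\lambda)/\lambda^n$ is indeed globally bounded under (2)), and the genuinely Tauberian direction (1)$\Rightarrow$(2) via polynomial approximation of the indicator, which you correctly identify as the technical heart and appropriately cite rather than reprove.
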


The next section is dedicated to the proof of Theorem \ref{main}. First, we prove the expression for the leading coefficient as a series (recovering the result in \cite{BanZey}). Next, we express the leading coefficient as the volume of $\Sn$ times an integral.

\section{Tonelli Tactic To Tie Together Two Tauberian Terms}\label{mainsec}

Instead of considering $N(\lambda)$ directly we define the function $G(t)=\sum_{j\in \N} e^{-\lambda_j t}$
and consider $$\lim_{t\to 0^+} t^n\sum_{j\in \N} e^{-\lambda_j t}$$ where the sequence $\{\lambda_j\}_{j\in \N}$ is the sequence of all positive eigenvalues (with multiplicity) of $\boxb$ on the sphere. Once we compute this limit we can invoke Karamata's Theorem. Using the preliminaries from section \ref{sec:ingredients}, we have
\begin{align*}
G(t)&=\sum_{j\in \N} e^{-\lambda_j t}=
\sum_{q=1}^\infty \sum_{p=0}^\infty \dim(\mathcal{H}_{p,q}) e^{-2q(p+n-1) t}\\
&=\sum_{q=1}^\infty \sum_{p=0}^\infty \left(\binom{n+p-1}{p}\binom{n+q-1}{q}-\binom{n+p-2}{p-1}\binom{n+q-2}{q-1}\right) (e^{-2 t q})^{p+n-1}.
\end{align*}
Applying the standard recursive formula for binomial coefficients to the first product of binomial coefficients gives us\footnote{Note that we set $\binom{a}{b}=0$ if $a\leq 0$.} \begin{align*}
\binom{n+p-1}{p}\binom{n+q-1}{q} &= \binom{n+p-1}{p}\left[\binom{n+q-2}{q} + \binom{n+q-2}{q-1}\right] \\
&= \binom{n+p-1}{p} \binom{n+q-2}{q} + \binom{n+p-1}{p} \binom{n+q-2}{q-1} \\ 
&= \binom{n+p-1}{p} \binom{n+q-2}{q} + \left[\binom{n + p - 2}{p} + \binom{n + p - 2}{p - 1} \right] \binom{n+q-2}{q-1} \\
&= \binom{n+p-1}{p} \binom{n+q-2}{q} + \binom{n + p - 2}{p}\binom{n+q-2}{q-1} + \binom{n + p - 2}{p - 1} \binom{n+q-2}{q-1}.
\end{align*}
This allows us to rewrite $G(t)$ as a sum of two positive pieces by noticing that 
\[G(t) =  \sum_{q=1}^\infty \sum_{p=0}^\infty \left(\binom{n+p-1}{p} \binom{n+q-2}{q} + \binom{n + p - 2}{p}\binom{n+q-2}{q-1}\right) e^{-2 t q(p+n-1)}.\]
We label the parts as  
\[ G_1(t) =  \sum_{q=1}^\infty \sum_{p=0}^\infty \binom{n+p-1}{p} \binom{n+q-2}{q}  e^{-2 t q(p+n-1)}\]
and
\[G_2(t) =  \sum_{q=1}^\infty \sum_{p=0}^\infty  \binom{n + p - 2}{p}\binom{n+q-2}{q-1} e^{-2 t q(p+n-1)},\]
so that $G(t)= G_1(t)+ G_2(t)$. 

Our goal is to calculate $\lim_{t \to 0^+} t^n G(t)$, which we do by calculating $ \lim_{t \to 0^+} t^n G_1(t)$ and $\lim_{t \to 0^+} t^n G_2(t)$ separately. 

Theorem \ref{main} claims that $\lim_{\la\to \infty} \frac{N(\la)}{\la^n}$ can be written either as an infinite series or as an improper integral. The key to obtaining these two distinct forms is Tonelli's Theorem (see \cite[Theorem 8.8]{Rudin}). Since the summands in $G_1(t)$ and $G_2(t)$ are positive, we can exchange the order of the infinite sums in each. When the outer sum is over $q$, we can show that $\lim_{t\to 0^+} G_1(t)$ is an infinite series and $\lim_{t\to 0^+} G_2(t)$ is an improper integral; exchanging the order of summation allows us to express $\lim_{t\to 0^+} G_1(t)$ as an integral and $\lim_{t\to 0^+} G_2(t)$ as an infinite series.

\subsection{Serious Series for Spherical Spectra}\label{series}

In this part we prove the infinite series formula for $\lim_{\la\to \infty} \frac{N(\la)}{\la^n}$. The computations for $G_1(t)$ and $G_2(t)$ are quite similar. In each instance we will apply the Dominated Convergence Theorem by leveraging Lemma \ref{doubleDCT}.

In each computation we eventually exchange the limit as $t\to 0^+$ with a sum and in each case we require the following limit calculation, which follows from L'Hopital's rule.

\begin{lemma}\label{limit}
Let $\alpha > 0$. Then
\[\lim_{t\to 0} \frac{t^n}{(1-e^{-\alpha t})^n}=\alpha^{-n}.\]
\end{lemma}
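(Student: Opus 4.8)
The plan is to prove Lemma \ref{limit} by a direct application of L'Hôpital's rule (or, equivalently, a Taylor expansion of the denominator near $t=0$). First I would observe that as $t\to 0$, both the numerator $t^n$ and the denominator $(1-e^{-\alpha t})^n$ tend to $0$, so the quotient is an indeterminate form of type $0/0$. Rather than differentiating $n$ times, it is cleaner to reduce to the case $n=1$: write
\[
\frac{t^n}{(1-e^{-\alpha t})^n} = \left(\frac{t}{1-e^{-\alpha t}}\right)^{\!n},
\]
so it suffices to show $\lim_{t\to 0}\frac{t}{1-e^{-\alpha t}} = \frac{1}{\alpha}$ and then invoke continuity of the map $x\mapsto x^n$ on $\R$.

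For the $n=1$ limit, a single application of L'Hôpital's rule gives
\[
\lim_{t\to 0}\frac{t}{1-e^{-\alpha t}} = \lim_{t\to 0}\frac{1}{\alpha e^{-\alpha t}} = \frac{1}{\alpha},
\]
since $\alpha e^{-\alpha t}\to \alpha \neq 0$ as $t\to 0$. Raising to the $n$th power yields $\alpha^{-n}$, as claimed. (Alternatively, one could expand $1-e^{-\alpha t} = \alpha t - \frac{(\alpha t)^2}{2} + O(t^3) = \alpha t\bigl(1 - \frac{\alpha t}{2} + O(t^2)\bigr)$ and read off the limit of $t/(1-e^{-\alpha t})$ directly; this avoids citing L'Hôpital at all.)

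I do not anticipate any real obstacle here — the statement is an elementary calculus fact, and the only thing worth being careful about is the reduction step, namely noting that the $n$th power can be pulled outside the limit because each factor converges to a finite nonzero limit, so there is no $\infty\cdot 0$ or similar issue to worry about. The hypothesis $\alpha > 0$ is used only to guarantee $1-e^{-\alpha t}\neq 0$ for small $t\neq 0$ (so the expression is well-defined) and that the limiting value $\alpha^{-n}$ makes sense; the argument would in fact go through for any $\alpha\neq 0$.
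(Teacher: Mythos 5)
Your proof is correct and matches the paper's approach: the paper simply remarks that the lemma ``follows from L'Hopital's rule,'' and your argument supplies exactly that, with the sensible extra care of reducing to the $n=1$ case before applying L'Hôpital. Nothing further is needed.
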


In order to exchange limits with sums we will apply the Dominated Convergence Theorem. This next technical lemma will come in handy for showing the conditions of the Dominated Convergence Theorem are satisfied in each case.  

\begin{lemma}\label{doubleDCT}
For $n\in\mathbb{N}$, there exists $M>0$ such that 
\[f(x)=\frac{x^n e^{2x(n-1)}}{(e^{2x}-1)^n}<M\] for all $x\in (0,\infty)$.
\end{lemma}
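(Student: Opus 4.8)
The plan is to prove the uniform bound
\[f(x)=\frac{x^n e^{2x(n-1)}}{(e^{2x}-1)^n}<M \quad\text{for all }x\in(0,\infty)\]
by a routine two-regime analysis of the continuous function $f$ on $(0,\infty)$: a neighborhood of $0$, and a neighborhood of $+\infty$. Since $f$ is continuous on the open ray, it suffices to show that $f$ extends continuously (or at least stays bounded) at both endpoints; then $f$ is bounded on every compact subinterval and hence on all of $(0,\infty)$.

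First I would handle $x\to 0^+$. Factoring $e^{2x}-1 = e^x(e^x - e^{-x}) $ is one option, but it is cleanest to write $e^{2x}-1 \sim 2x$ as $x\to 0$; more precisely, $\lim_{x\to 0^+}\frac{x}{e^{2x}-1}=\tfrac12$ by L'Hôpital (or by the derivative of $e^{2x}$ at $0$), so $\lim_{x\to 0^+}\frac{x^n}{(e^{2x}-1)^n}=2^{-n}$, which is exactly Lemma~\ref{limit} with $\alpha=2$. Since $e^{2x(n-1)}\to 1$ as $x\to 0^+$, we get $\lim_{x\to 0^+}f(x)=2^{-n}$, so $f$ is bounded near $0$. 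Next I would handle $x\to+\infty$. Here the dominant exponential in the denominator is $e^{2nx}$, so $\frac{e^{2x(n-1)}}{(e^{2x}-1)^n} \le \frac{e^{2x(n-1)}}{(e^x)^{2n}\cdot c}$ for large $x$ (using $e^{2x}-1 \ge \tfrac12 e^{2x}$ once $e^{2x}\ge 2$, i.e. $x\ge \tfrac{\ln 2}{2}$), which is a constant multiple of $e^{-2x}$; multiplying by the polynomial $x^n$ still tends to $0$. Hence $\lim_{x\to+\infty}f(x)=0$, so $f$ is bounded for large $x$.

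Combining the two regimes: pick $0<a<b<\infty$ with $f(x)\le 2^{-n}+1$ on $(0,a]$ and $f(x)\le 1$ on $[b,\infty)$ (possible by the two limit computations), and set $M = \max\{2^{-n}+1,\ 1,\ \sup_{[a,b]} f\}+1$; the supremum over the compact interval $[a,b]$ is finite by continuity of $f$. Then $f(x)<M$ for all $x\in(0,\infty)$, as claimed. I do not expect any genuine obstacle here — the only thing to be slightly careful about is that $f$ has a removable singularity at $0$ rather than a pole (which is why the estimate $e^{2x}-1\ge 2x$, valid for $x>0$, is the convenient inequality to invoke for the small-$x$ side if one wants to avoid limits altogether). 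An explicit alternative, avoiding compactness, is to note $e^{2x}-1\ge 2x$ for all $x\ge 0$, giving $f(x)\le \frac{x^n e^{2x(n-1)}}{(2x)^n}=\frac{e^{2x(n-1)}}{2^n}$ — but this blows up, so one really does need the genuine denominator $e^{2x}-1\ge \tfrac12 e^{2x}$ for large $x$; hence the cleanest writeup keeps the endpoint-limit argument above. The whole lemma is elementary calculus and I would present it in a few lines.
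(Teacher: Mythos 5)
Your proposal is correct and takes essentially the same approach as the paper: show that $f$ is continuous on $(0,\infty)$, compute $\lim_{x\to 0^+} f(x) = 2^{-n}$ via L'H\^opital and $\lim_{x\to\infty} f(x) = 0$, and conclude boundedness by compactness of the middle interval. Your write-up is a bit more detailed on the $x\to\infty$ estimate, but the structure and key steps match the paper's proof.
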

\begin{proof}
The function $f$ is continuous on $(0,\infty)$. If we show that $\lim_{x\to \infty} f(x)<\infty$ and $\lim_{x\to 0^+} f(x)<\infty$ it will follow that there exists some $M$ such that $f(x)<M$ on $(0,\infty)$. Indeed, $\lim_{x\to \infty} f(x) = 0$. Further, L'Hopital's rule gives $\lim_{x\to 0^+}\frac{x}{e^{2x}-1}=\frac{1}{2}$
and so $\lim_{x\to 0^+}\frac{x^n e^{2x(n-1)}}{(e^{2x}-1)^n} =\frac{1}{2^n}$.
\end{proof}

Now we are ready to compute $\lim_{t\to 0^+}t^n G_1(t)$.

\begin{proposition}
\label{firstsum}
\[\lim_{t\to 0^+} t^n G_1(t) = \frac{1}{2^n}\sum_{q=1}^\infty \binom{n+q-2}{q}\frac{1}{q^n}.\]
\end{proposition}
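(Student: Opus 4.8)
The plan is to compute $\lim_{t\to 0^+} t^n G_1(t)$ by first summing the inner series over $p$ in closed form, then exchanging the limit in $t$ with the outer sum over $q$ via the Dominated Convergence Theorem, using Lemma \ref{doubleDCT} to produce the dominating function. Recall that
\[G_1(t) = \sum_{q=1}^\infty \binom{n+q-2}{q} \sum_{p=0}^\infty \binom{n+p-1}{p} \bigl(e^{-2tq}\bigr)^{p+n-1}.\]
The inner sum is essentially a negative binomial series: since $\sum_{p=0}^\infty \binom{n+p-1}{p} x^p = (1-x)^{-n}$ for $|x|<1$, setting $x = e^{-2tq}$ gives $\sum_{p=0}^\infty \binom{n+p-1}{p} (e^{-2tq})^{p+n-1} = e^{-2tq(n-1)}(1-e^{-2tq})^{-n}$. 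Hence
\[t^n G_1(t) = \sum_{q=1}^\infty \binom{n+q-2}{q} \cdot \frac{t^n e^{-2tq(n-1)}}{(1-e^{-2tq})^n}.\]

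Next I would pass the limit $t\to 0^+$ inside the sum. For each fixed $q$, Lemma \ref{limit} with $\alpha = 2q$ gives $\lim_{t\to 0^+} \frac{t^n}{(1-e^{-2tq})^n} = (2q)^{-n}$, while $e^{-2tq(n-1)}\to 1$, so the $q$-th term tends to $\binom{n+q-2}{q}\frac{1}{(2q)^n} = \frac{1}{2^n}\binom{n+q-2}{q}\frac{1}{q^n}$. To justify the interchange, rewrite the $q$-th summand as $\binom{n+q-2}{q} \cdot \frac{1}{q^n}\cdot \frac{(tq)^n e^{-2(tq)(n-1)}}{(1-e^{-2tq})^n}$; substituting $x = tq$ into $f$ from Lemma \ref{doubleDCT} (after checking $\frac{x^n e^{2x(n-1)}}{(e^{2x}-1)^n} = \frac{x^n e^{-2x(n-1)}}{(1-e^{-2x})^n}$, which matches the bracketed factor) shows this factor is bounded by $M$ uniformly in $t>0$ and $q\geq 1$. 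Therefore the $q$-th summand is dominated by $M\binom{n+q-2}{q}\frac{1}{q^n}$, which is summable in $q$ since $\binom{n+q-2}{q} = O(q^{n-2})$ and $\sum q^{n-2}/q^n = \sum q^{-2} < \infty$. The Dominated Convergence Theorem (for series, i.e. with counting measure) then yields
\[\lim_{t\to 0^+} t^n G_1(t) = \sum_{q=1}^\infty \binom{n+q-2}{q} \cdot \frac{1}{(2q)^n} = \frac{1}{2^n}\sum_{q=1}^\infty \binom{n+q-2}{q}\frac{1}{q^n}.\]

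The main obstacle is setting up the dominating function correctly: the factor depending on $t$ and $q$ must be organized so that it is a function of the single variable $x = tq$ lying in $(0,\infty)$, which is exactly what makes Lemma \ref{doubleDCT} applicable and gives a bound independent of both $t$ and $q$ simultaneously. Once that algebraic rearrangement is in place, the summability of the dominating series is immediate from the polynomial growth of $\binom{n+q-2}{q}$, and the pointwise limit is just Lemma \ref{limit}. I expect the write-up of $G_2(t)$ to follow the same template with only bookkeeping changes in the binomial coefficients.
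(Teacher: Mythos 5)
Your proposal follows the paper's proof essentially step by step: close the inner $p$-sum via the negative binomial series to get $t^n G_1(t)=\sum_{q\geq 1}\binom{n+q-2}{q}\frac{t^n e^{-2tq(n-1)}}{(1-e^{-2tq})^n}$, then use Lemma~\ref{doubleDCT} to produce a $t$-independent dominating series in $q$ and pass to the limit by dominated convergence, with the pointwise limit supplied by Lemma~\ref{limit}. However, there is a small algebraic error in your domination step: the identity you claim to ``check,'' namely
\[
\frac{x^n e^{2x(n-1)}}{(e^{2x}-1)^n} \;=\; \frac{x^n e^{-2x(n-1)}}{(1-e^{-2x})^n},
\]
is false for $n>2$. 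Multiplying the right-hand side through by $e^{2xn}/e^{2xn}$ gives $\frac{x^n e^{2x}}{(e^{2x}-1)^n}$, not $f(x)=\frac{x^n e^{2x(n-1)}}{(e^{2x}-1)^n}$; the two agree only when $n=2$. The argument is nevertheless easily saved: since $n\geq 2$ and $x>0$ we have $e^{2x}\leq e^{2x(n-1)}$, so the bracketed factor satisfies $\frac{x^n e^{2x}}{(e^{2x}-1)^n}\leq f(x)\leq M$. The paper records precisely this as an inequality. So replace your claimed equality with that inequality, and the rest of your proof stands and matches the paper's.
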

\begin{proof}

We start by coming up with an expression for $G_1(t)$ containing only a single sum.
For $|z| < 1$, there is the Taylor series expansion
$$\frac{1}{(1-z)^n} = \sum_{p=0}^\infty \binom{n+p-1}{p}z^p = \sum_{p=1}^\infty \binom{n+p-2}{p-1}z^{p-1}.$$ 
Let $z = e^{-2tq}$; then
\begin{align*}
    G_1(t) &=  \sum_{q=1}^\infty \sum_{p=0}^\infty \binom{n+p-1}{p} \binom{n+q-2}{q}  e^{-2 t q(p+n-1)}\\
    &=
    \sum_{q=1}^\infty \left(\binom{n+q-2}{q}\sum_{p=0}^\infty \binom{n+p-1}{p} (e^{-2 t q})^{p+n-1}\right) \\
    &=\sum_{q=1}^\infty \binom{n+q-2}{q}\sum_{p=0}^\infty \binom{n+p-1}{p} z^{p+n-1}\\
    &=\sum_{q=1}^\infty \binom{n+q-2}{q}z^{n-1}\sum_{p=0}^\infty \binom{n+p-1}{p} z^{p} \\
    &=\sum_{q=1}^\infty \binom{n+q-2}{q}\frac{z^{n-1}}{(1-z)^n}\\
    &=\sum_{q=1}^\infty \binom{n+q-2}{q}\frac{(e^{-2 t q})^{n-1}}{(1-e^{-2 t q})^n}\\
    &=\sum_{q=1}^\infty \binom{n+q-2}{q}\frac{e^{2 t q}}{(e^{2 t q}-1)^n}.
\end{align*}

For positive $t$, $\frac{t^n(e^{2 t q})}{(e^{2 t q}-1)^n}\leq \frac{1}{q^n}\frac{(qt)^n(e^{2 t q (n-1)})}{(e^{2 t q}-1)^n} =\frac{f(qt)}{q^n}\leq \frac{M}{q^n}$ where $f$ is as in Lemma \ref{doubleDCT}.
Hence, $t^n \binom{n+q-2}{q} \frac{(e^{2 t q})}{(e^{2 t q}-1)^n}\leq M \binom{n+q-2}{q} \frac{1}{q^n}$ for all $t$ and since $\sum_{q=1}^\infty M \binom{n+q-2}{q} \frac{1}{q^n}$ converges, we are free to apply the Dominated Convergence Theorem to $\lim_{t\to 0^+}t^n G_1(t)$.

This technical justification allows us to exchange the order of the limit and summation in $\lim_{t\to 0^+}t^n G_1(t)$ and conclude that
$$\lim_{t\to 0^+}\sum_{q=1}^\infty \binom{n+q-2}{q}\frac{t^n(e^{-2 t q})^{n-1}}{(1-e^{-2 t q})^n} = \sum_{q=1}^\infty \lim_{t\to 0^+} \binom{n+q-2}{q}\frac{t^n(e^{-2 t q})^{n-1}}{(1-e^{-2 t q})^n} =\sum_{q=1}^\infty \binom{n+q-2}{q}\frac{1}{(2q)^n},$$
where the final equality follows from Lemma \ref{limit}.

\end{proof}

Next we move to the second piece and compute $\lim_{t\to 0^+} t^n G_2(t)$. 

\begin{proposition}\label{secondsum}
\[\lim_{t\to 0^+} t^n G_2(t) = \frac{1}{2^n}\sum_{q=1}^\infty \binom{q-1}{n-2}\frac{1}{q^n}.\]
\end{proposition}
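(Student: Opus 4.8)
The plan is to mirror the proof of Proposition \ref{firstsum}, but with one essential change: in $G_2(t)$ the inner summation must be carried out over $q$, not over $p$. This is forced by the structure of the problem. If one keeps $q$ as the outer index and sums the $p$-series first, one arrives at $G_2(t) = \sum_{q=1}^\infty \binom{n+q-2}{q-1}(e^{2tq}-1)^{-(n-1)}$, and then $t^n$ times each $q$-summand tends to $0$ as $t\to 0^+$; the Dominated Convergence Theorem would give the (wrong) value $0$, because all of the surviving mass escapes to $q=\infty$. Exchanging the order of summation via Tonelli's Theorem cures this.

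So I would first write, using Tonelli,
\[
G_2(t) = \sum_{p=0}^\infty \binom{n+p-2}{p}\sum_{q=1}^\infty \binom{n+q-2}{q-1}\bigl(e^{-2t(p+n-1)}\bigr)^q .
\]
Putting $w = e^{-2t(p+n-1)} \in (0,1)$ and using the identity $\sum_{q=1}^\infty \binom{n+q-2}{q-1} w^q = w(1-w)^{-n}$ --- obtained by shifting the index in the Taylor expansion $\sum_{m\ge 0}\binom{n+m-1}{m}w^m = (1-w)^{-n}$ --- the inner sum collapses, and after multiplying numerator and denominator by $e^{2nt(p+n-1)}$ one gets the single-sum expression
\[
G_2(t) = \sum_{p=0}^\infty \binom{n+p-2}{p}\,\frac{e^{2t(p+n-1)(n-1)}}{\bigl(e^{2t(p+n-1)}-1\bigr)^n}.
\]

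Next I would justify passing $\lim_{t\to 0^+}$ through the $p$-sum. With the substitution $x = t(p+n-1)$, the $p$-th summand of $t^n G_2(t)$ is exactly $\binom{n+p-2}{p}(p+n-1)^{-n} f(x)$, where $f$ is the function of Lemma \ref{doubleDCT}; hence it is bounded by $M\binom{n+p-2}{p}(p+n-1)^{-n}$ uniformly in $t$. Since $\binom{n+p-2}{p}$ is a polynomial in $p$ of degree $n-2$, this majorant is summable, so the Dominated Convergence Theorem applies. By Lemma \ref{limit} (together with $e^{2t(p+n-1)(n-1)}\to 1$ and $e^{2t(p+n-1)}-1 \sim 1-e^{-2t(p+n-1)}$ as $t\to 0^+$) the $p$-th term tends to $\binom{n+p-2}{p}\bigl(2(p+n-1)\bigr)^{-n}$, so
\[
\lim_{t\to 0^+} t^n G_2(t) = \frac{1}{2^n}\sum_{p=0}^\infty \frac{1}{(p+n-1)^n}\binom{n+p-2}{p}.
\]

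Finally I would reindex by $q = p+n-1$. Then $\binom{n+p-2}{p} = \binom{q-1}{q-n+1} = \binom{q-1}{n-2}$, and the terms $1\le q\le n-2$ that the reindexing introduces all vanish since then $q-1 < n-2$. This rewrites the sum as $\tfrac{1}{2^n}\sum_{q=1}^\infty \binom{q-1}{n-2}q^{-n}$, which is the assertion. I expect the one genuinely delicate point to be the one already emphasized above --- selecting the order of summation for which the termwise limits are nonzero; granting that, the convergence of the dominating series and the reindexing are routine, just as in Proposition \ref{firstsum}.
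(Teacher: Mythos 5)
Your proof is correct and follows essentially the same path as the paper: apply Tonelli to exchange the order of summation, collapse the inner $q$-sum via the geometric-series identity $\sum_{q\ge 1}\binom{n+q-2}{q-1}z^q = z(1-z)^{-n}$, bound $t^n$ times the $p$-th term by $M\binom{n+p-2}{p}(p+n-1)^{-n}$ using Lemma \ref{doubleDCT}, and pass to the limit by dominated convergence and Lemma \ref{limit}. The only difference is cosmetic --- you reindex $q=p+n-1$ after taking the limit whereas the paper substitutes $w=p+n-1$ before --- and your remark explaining why the unswitched summation order would make every termwise limit vanish (so DCT would give the wrong answer $0$) is a correct and useful piece of motivation.
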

\begin{proof}
We start out by manipulating the form of $G_2(t)$ as we did with $G_1(t)$ but this time we apply Tonelli's Theorem to switch the order of summation. In our calculation we will make use of the substitutions $z=e^{-2t(p+n-1)}$ and $w=p+n-1$ and we will apply the power series expansion of $\frac{1}{(1-z)^{n-1}}$.

\begin{align*}
    G_2(t)&=\sum_{q=1}^\infty \sum_{p=0}^\infty \binom{n+p-2}{p}\binom{n+q-2}{q-1}e^{-2tq(p+n-1)}
    =\sum_{p=0}^\infty \binom{n+p-2}{p} \sum_{q=1}^\infty \binom{n+q-2}{q-1}e^{-2tq(p+n-1)}\\
    &=\sum_{p=0}^\infty \binom{n+p-2}{p} z \sum_{q=1}^\infty \binom{n+q-2}{q-1}z^{q-1}=\sum_{p=0}^\infty \binom{n+p-2}{p} \frac{z}{(1-z)^n}\\
    &=\sum_{p=0}^\infty \binom{n+p-2}{n-2} \frac{e^{-2t(p+n-1)}}{(1-e^{-2t(p+n-1)})^n}=\sum_{w=n-1}^\infty \binom{w-1}{n-2}\frac{e^{-2tw}}{(1-e^{-2tw})^n}\\
    &=\sum_{w=1}^\infty \binom{w-1}{n-2}\frac{e^{-2tw}}{(\frac{e^{2tw}-1}{e^{2tw}})^n}=\sum_{w=1}^\infty \binom{w-1}{n-2} \frac{e^{2tw(n-1)}}{(e^{2tw}-1)^n}.
\end{align*}
By Lemma \ref{doubleDCT}, $t^n \binom{w-1}{n-2} \frac{e^{2tw(n-1)}}{(e^{2tw}-1)^n}=\binom{w-1}{n-2}\frac{f(tw)}{w^n} \leq M\binom{w-1}{n-2}\frac{1}{w^n}.$ As \(\sum_{w=0}^\infty M\binom{w-1}{n-2}\frac{1}{w^n}\) converges, we apply the Dominated Convergence Theorem to $\lim_{t\to 0^+} t^nG_2(t)$ in order to exchange the limit and the sum. Thus,
$$\lim_{t\to 0^+} t^n G_2(t) = \sum_{w=1}^\infty \lim_{t\to 0^+}\binom{w-1}{n-2}\frac{t^n e^{-2tw}}{(\frac{e^{2tw}-1}{e^{2tw}})^n}=\frac{1}{2^n}\sum_{w=1}^\infty \binom{w-1}{n-2}\frac{1}{w^n}.   $$
\end{proof}

Applying the Tauberian Theorem \ref{karamata} in combination with Propositions \ref{firstsum} and \ref{secondsum} proves that
\[\lim_{\la\to \infty} \frac{N(\la)}{\la^n} =  \frac{1}{2^n n!}\sum_{q=1}^\infty \frac{1}{q^n}\left[\binom{n+q-2}{q}+\binom{q-1}{n-2}  \right].\]
This concludes the proof of the first identity in Theorem \ref{main}.

\subsection{Isn't it Interesting: Intense Integral is Identical In Immensity}\label{integral}

In this subsection we will prove the second part of Theorem \ref{main}.  We start off by re-analyzing $G_1(t)$:
\begin{align*}
    G_1(t) &= \sum_{q=1}^\infty \sum_{p=0}^\infty \binom{n+p-1}{p} \binom{n+q-2}{q}  e^{-2 t q(p+n-1)} \\
    &= \sum_{p=0}^\infty \sum_{q=1}^\infty \binom{n+p-1}{p} \binom{n+q-2}{q}  e^{-2 t q(p+n-1)} \\
    &= \sum_{p=0}^{\infty}\binom{n+p-1}{p}\sum_{q=1}^{\infty}\binom{n+q-2}{q}  (e^{-2 t (p + n - 1)})^{q} \\
    &= \sum_{p=0}^{\infty}\binom{n+p-1}{p}\left( \frac{1}{(1 - e^{-2t(p + n - 1)})^{n-1}} - 1 \right) \\
    &= \sum_{w=n-1}^{\infty}\binom{w}{n-1}\left( \frac{1}{(1 - e^{-2tw})^{n-1}} - 1 \right) \\
\end{align*}
where the switching of the order of summation is justified by Tonelli's Theorem and we have taken $w = n + p - 1$. 
To calculate the limit as $t\to 0^+$, we need the following lemma.
\begin{lemma}\label{gross1}
    Fix an integer $r \geq 1$ and consider the function
    \[f_{r}(x) = x^r \left( \frac{1}{(1 - e^{-2x})^r} - 1 \right)\]
    defined for $x > 0$. Then
    \begin{enumerate}
        \item $f_{r}(x) > 0$.
        \item $f_{r}'(x) < 0$ for sufficiently large $x$.
        \item $f_{r}$ is bounded and $\int_{0}^\infty f_{r}(x) dx < \infty.$
    \end{enumerate}
\end{lemma}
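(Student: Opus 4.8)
The plan is to prove the three assertions about $f_r(x) = x^r\bigl((1-e^{-2x})^{-r} - 1\bigr)$ in order, exploiting the fact that $(1-e^{-2x})^{-r} - 1$ is positive and decreasing on $(0,\infty)$.

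For part (1): since $0 < e^{-2x} < 1$ for $x > 0$, we have $0 < 1 - e^{-2x} < 1$, hence $(1-e^{-2x})^{-r} > 1$, so the parenthetical factor is strictly positive; multiplying by $x^r > 0$ gives $f_r(x) > 0$.

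For part (3), which I would do before part (2): the boundedness and integrability both hinge on the behavior at the two ends of the interval. As $x \to 0^+$, write $1 - e^{-2x} = 2x(1 + O(x))$, so $(1-e^{-2x})^{-r} - 1 = (2x)^{-r}(1+O(x)) - 1 \sim (2x)^{-r}$, and therefore $f_r(x) \sim x^r \cdot (2x)^{-r} = 2^{-r}$; in particular $f_r$ extends continuously to $x=0$ with value $2^{-r}$, and the integral near $0$ converges. As $x \to \infty$, $(1-e^{-2x})^{-r} - 1 = O(e^{-2x})$ by the geometric-type expansion $(1-u)^{-r} - 1 = \sum_{k\ge 1}\binom{r+k-1}{k}u^k \le Ce^{-2x}$ for $x \ge 1$, so $f_r(x) \le C x^r e^{-2x} \to 0$ and the tail integral converges as well. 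Continuity on $(0,\infty)$ together with finite limits at both endpoints yields boundedness on all of $(0,\infty)$, exactly as in the proof of Lemma \ref{doubleDCT}; and the two convergent pieces give $\int_0^\infty f_r(x)\,dx < \infty$.

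For part (2): it suffices to show $f_r'(x) < 0$ for large $x$. Differentiating, $f_r'(x) = r x^{r-1}\bigl((1-e^{-2x})^{-r} - 1\bigr) + x^r \cdot \frac{d}{dx}(1-e^{-2x})^{-r}$, and $\frac{d}{dx}(1-e^{-2x})^{-r} = -2r\, e^{-2x}(1-e^{-2x})^{-r-1}$. The first term is positive but, by the part (3) asymptotics, of order $x^{r-1}e^{-2x}$; the second term is negative of order $x^r e^{-2x}$. Factoring out $r x^{r-1}e^{-2x}(1-e^{-2x})^{-r-1} > 0$ leaves a bracket whose dominant contribution is $-2x$ as $x\to\infty$ (the remaining terms being $O(1)$), which is eventually negative. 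Hence $f_r'(x) < 0$ for sufficiently large $x$.

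The main obstacle is purely bookkeeping: extracting clean, uniform bounds for the $x\to 0^+$ asymptotics $f_r(x)\to 2^{-r}$ — one must control $1 - e^{-2x} - 2x$ to relative order $x$ — and similarly the $x\to\infty$ decay; once these are in hand, parts (1) and (2) follow by elementary sign analysis and parts (2)–(3) are immediate. No new ideas beyond those already used for Lemma \ref{doubleDCT} are needed.
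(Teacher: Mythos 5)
Your proposal is correct and follows essentially the same route as the paper's proof: part (1) by the elementary inequality $0<1-e^{-2x}<1$; part (2) by differentiating, factoring out the positive quantity $rx^{r-1}e^{-2x}(1-e^{-2x})^{-r-1}$, and observing that the remaining bracket is $O(1) - 2x \to -\infty$ (your expression for the bracket is literally the paper's $l(x)=e^{2x}-1-(1-e^{-2x})^{r+1}e^{2x}-2x$, just written differently); and part (3) by showing $f_r$ has a finite limit $2^{-r}$ at $0$ and decays like $x^r e^{-2x}$ at $\infty$. The only cosmetic difference is that the paper bounds the tail via the identity $a^r-b^r=(a-b)\sum_{i}a^ib^{r-1-i}$ and uses its Lemma \ref{limit} (L'Hôpital) at $0$, whereas you use the binomial series for $(1-u)^{-r}-1$ and a Taylor expansion of $1-e^{-2x}$; these are interchangeable computations of the same bounds, not a different strategy.
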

\begin{proof}

We can tell $f_{r}(x)>0$ since $e^{-2x}<1$ so $0<1-e^{-2x}<1$ and hence $\frac{1}{(1-e^{-2x})^r}>1$, proving (1). 

To establish (2), a derivative calculation shows that 
\begin{align*}
    f_r'(x) &= rx^{r-1}\left(\frac{1}{(1 - e^{-2x})^r} - 1\right) -  x^r\frac{2re^{-2x}}{(1 - e^{-2x})^{r+1}} \\
    &= \frac{r x^{r-1}}{(1 - e^{-2x})^{r+1}}\left[1 - e^{-2x} - (1 - e^{-2x})^{r+1} - 2xe^{-2x} \right] \\
    &= \frac{r x^{r-1}}{(1 - e^{-2x})^{r+1}e^{2x}}\left[e^{2x}- 1 - (1 - e^{-2x})^{r+1}e^{2x} - 2x \right].
\end{align*}

Define $l(x) = e^{2x} - 1 - (1 - e^{-2x})^{r+1}e^{2x} - 2x.$ To prove (2), it suffices to show that $l(x) < 0$ for sufficiently large $x$. For this, let $y = e^{-2x}$ and note that 
$$\lim_{x \to \infty} e^{2x} - 1 - (1 - e^{-2x})^{r+1}e^{2x} = \lim_{y \to 0^+} \frac{1 - y - (1-y)^{r+1}}{y} = \lim_{y \to 0^+} -1 + (r+1)(1-y)^{r} = r.$$ Looking at the definition of $l$, 
the first three terms converge to $r$ and the last goes to $-\infty$, so $l(x) < 0$ for sufficiently large $x$.

To prove (3) we will show that $f_r(x)$ is bounded by a constant near 0 and bounded by an exponentially decaying function for large $x$.
The function $f_{r}(x)$ isn't defined at $x=0$, but Lemma \ref{limit} implies that $\lim_{x\to 0} f_{r}(x)<\infty$ so the discontinuity at 0 is removable. This implies that $f_r(x)$ is bounded on $[0,\frac{\log(2)}{2}]$. To bound $f_r(x)$ on $[\frac{\log 2}{2},\infty)$ we note
\[\frac{1}{(1-e^{-2x})^r}-1=\frac{1}{((e^{2x}-1)/e^{2x})^r}-1= \frac{(e^{2x})^r-(e^{2x}-1)^r}{(e^{2x}-1)^r}.
\]
Using the formula for a difference of $r$th powers we obtain the following expression for the numerator of the above fraction
\begin{align*}
    (e^{2x})^r-(e^{2x}-1)^r&=(e^{2x}-(e^{2x}-1)) \sum_{i=0}^{r-1} (e^{2x})^i (e^{2x}-1 )^{r-1-i}\\
    &\leq \sum_{i=0}^{r-1} (e^{2x})^i (e^{2x})^{r-1-i}=re^{2x(r-1)}.
\end{align*}
Hence, \[\frac{1}{(1-e^{-2x})^r}-1 \leq \frac{re^{2x(r-1)}}{(e^{2x}-1)^r}.\]
Note $e^{2x}-1 \geq \frac{e^{2x}}{2}$ for $x \geq \frac{\log(2)}{2}$ so 
\[\frac{1}{(1-e^{-2x})^r}-1 \leq \frac{re^{2x(r-1)}}{(e^{2x}/2)^r}=\frac{r2^r}{e^{2x}}.\]
Applying this to $f_r(x)$ gives that when $x \geq \frac{\log(2)}{2},$
\[f_r(x)\leq \frac{r2^rx^r}{e^{2x}}.\]
Thus $f_r(x)$ is dominated by an exponentially decaying function for sufficiently large $x$, so $\int_0^\infty f_r(x)\,dx<\infty.$

\end{proof}

It will be helpful to make the following definition: 
\begin{defin}
For real $\alpha \neq 0$, define the scaled ceiling function $\lceil \cdot \rceil_{\alpha}: \R \to \R$ by
\[\lceil x \rceil_{\alpha} = \alpha\lceil x/\alpha \rceil.\]
\end{defin} 
For example, $\lceil 7 \rceil_{3} = 3 \lceil \frac{7}{3} \rceil = 3 \cdot 3 = 9$ and $\lceil 6 \rceil_{2} = 2 \lceil \frac{6}{2} \rceil = 2 \cdot 3 = 6$. Then we have the following:
\begin{proposition}
$\sceil{x}{\alpha}$ is $x$ rounded up to the nearest integral multiple of $\alpha$, i.e. \[\lceil x \rceil_{\alpha} = \min\{n\alpha: n \in \Z, n\alpha \geq x\}.\] 
\end{proposition}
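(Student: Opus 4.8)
The plan is to deduce the identity directly from the order-theoretic description of the ordinary ceiling function, so essentially no computation is required. Throughout I would work under the hypothesis $\alpha > 0$, which is the only case used in the sequel; for $\alpha < 0$ the set on the right still has a least element, but it equals $\alpha\lfloor x/\alpha\rfloor$ rather than $\sceil{x}{\alpha}$, so there one should read the statement with $|\alpha|$ in place of $\alpha$.

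First I would record the standard characterization of the ceiling: for every real $y$ the set $\{m\in\Z : m \ge y\}$ is nonempty and bounded below (Archimedean property), hence has a least element, and by definition $\lceil y\rceil = \min\{m\in\Z : m \ge y\}$. Specializing to $y = x/\alpha$ gives $\lceil x/\alpha\rceil = \min\{m\in\Z : m \ge x/\alpha\}$.

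Next I would multiply through by $\alpha$. Since $\alpha > 0$, the map $t \mapsto \alpha t$ is strictly increasing on $\R$, so it carries a set having a least element to a set having a least element, sending the least element of the domain set to the least element of the image set; moreover $m \ge x/\alpha \iff \alpha m \ge x$ for every $m$. Combining these observations,
\begin{align*}
\sceil{x}{\alpha} = \alpha\lceil x/\alpha\rceil &= \alpha\,\min\{m\in\Z : m \ge x/\alpha\} \\
&= \min\{n\alpha : n\in\Z,\ n\alpha \ge x\}.
\end{align*}
Finally, since $\{n\alpha : n\in\Z\}$ is precisely the set of integral multiples of $\alpha$, this last quantity is by construction $x$ rounded up to the nearest such multiple, which also justifies the informal description.

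The argument is genuinely short, so the only points needing care are (i) verifying that the minimum exists before it is manipulated — i.e. that $\{m\in\Z : m \ge x/\alpha\}$ is nonempty and bounded below — and (ii) the sign of $\alpha$, since multiplying an inequality, or a minimum, by a negative number reverses the order. I expect (ii) to be the sole genuine obstacle, and it is handled simply by restricting to $\alpha > 0$.
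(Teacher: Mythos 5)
Your proof is correct and follows essentially the same route as the paper's: rewrite $\lceil x/\alpha \rceil$ as a minimum over integers, convert $m \ge x/\alpha$ to $\alpha m \ge x$ (using $\alpha > 0$), and push $\alpha$ past the minimum. Your remark that the identity as stated requires $\alpha > 0$ (while the definition nominally allows any $\alpha \ne 0$) is a fair observation the paper leaves implicit, but it does not affect the sequel, which only uses positive $\alpha = t$.
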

\begin{proof}
\[\sceil{x}{\alpha} = \alpha \min\{n \in \Z: n \geq x/\alpha\} = \alpha \min\{n \in \Z: \alpha n \geq x\} \]
and the result follows.
\end{proof}

The scaled ceiling function has the following properties:
\begin{enumerate}
    \item Fix $x \in \R$, $\alpha > 0$. Then $0 \leq \sceil{x}{\alpha} - x < \alpha$.
    \item Fix $x \in \R$. Then $\lim_{\alpha \to 0^+} \sceil{x}{\alpha} = x$.
    \item Let $f: [a,b] \to \R$ be monotonically decreasing. Fix $0 < \alpha \leq b - a$. Then for all $x \in [a,b-\alpha]$, $f(\sceil{x}{\alpha}) \leq f(x)$.
\end{enumerate}

The next few propositions will allow us to compute $\lim_{t\to 0^+} G_1(t)$.

\begin{proposition} \label{leading-term}
\[
\lim_{t \to 0^+} t^n \sum_{w=n-1}^{\infty}w^{n-1} \left( \frac{1}{(1 - e^{-2tw})^{n-1}} - 1 \right) = \int_{0}^\infty x^{n-1} \left( \frac{1}{(1 - e^{-2x})^{n-1}} - 1 \right)dx. \] 
\end{proposition}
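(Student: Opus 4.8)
The plan is to interpret the series, after multiplication by $t^n$, as the integral of a step function that converges pointwise to the integrand, and then to invoke the Dominated Convergence Theorem; the scaled ceiling function introduced above is tailored for exactly this. Set $r = n-1 \geq 1$ and recall the function $f_r$ of Lemma \ref{gross1}. Since $t^n w^{n-1}\bigl(\tfrac{1}{(1-e^{-2tw})^{n-1}}-1\bigr) = t\,f_r(tw)$, the quantity whose limit is sought is $t\sum_{w=n-1}^{\infty} f_r(tw)$. By Lemma \ref{limit} the singularity of $f_r$ at the origin is removable, so $f_r$ extends continuously to $[0,\infty)$; by Lemma \ref{gross1} it is positive, bounded by some $C > 0$, integrable on $(0,\infty)$, and (fixing $A > 0$ from part (2)) decreasing on $[A,\infty)$.

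First I would introduce $g_t(x) := f_r(\sceil{x}{t})$ for $x > 0$, a nonnegative step function equal to $f_r(tw)$ on each half-open interval $(t(w-1),\,tw]$. A direct computation of the integral of this step function gives $\int_{t(n-2)}^{\infty} g_t(x)\,dx = t\sum_{w=n-1}^{\infty} f_r(tw)$, and since $0 \leq \int_{0}^{t(n-2)} g_t(x)\,dx \leq (n-2)\,C\,t \to 0$, it is enough to prove that $\int_{0}^{\infty} g_t(x)\,dx \to \int_{0}^{\infty} f_r(x)\,dx$ as $t \to 0^+$.

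For the pointwise statement, property (2) of the scaled ceiling gives $\sceil{x}{t} \to x$ as $t \to 0^+$ for each fixed $x > 0$, whence $g_t(x) \to f_r(x)$ by continuity of $f_r$. For the dominating function, I would split at $A$: on $(0,A)$ we have $g_t(x) \leq C$; for $x \geq A$, note $\sceil{x}{t} \in [x,\,x+t) \subseteq [A,\infty)$ with $\sceil{x}{t} \geq x$, so the monotonicity of $f_r$ on $[A,\infty)$ (see property (3) above) yields $g_t(x) = f_r(\sceil{x}{t}) \leq f_r(x)$. Hence $g_t \leq h$ for every $t > 0$, where $h$ equals $C$ on $(0,A)$ and $f_r$ on $[A,\infty)$; this $h$ is integrable by Lemma \ref{gross1}(3), so the Dominated Convergence Theorem finishes the proof.

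The one step that needs genuine care is the domination: it relies on choosing $A$ so that $f_r$ is actually decreasing past $A$ — which is exactly the content of Lemma \ref{gross1}(2) — and on the integrability of the tail of $f_r$, which is Lemma \ref{gross1}(3). Everything else (the step-function identity for $\int g_t$, the estimate on the short segment $(0, t(n-2))$, and continuity of $f_r$ at interior points) is routine bookkeeping, so I do not anticipate a serious obstacle. A more hands-on alternative would be to estimate $t\sum_w f_r(tw)$ directly as a Riemann sum, splitting off a compact piece on which $f_r$ is uniformly continuous and squeezing the eventually-monotone tail between integrals, but the Dominated Convergence argument above is cleaner and reuses the scaled ceiling machinery already set up.
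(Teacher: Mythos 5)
Your proof is correct and follows essentially the same route as the paper's: rewrite the sum as $\int_{t(n-2)}^\infty f_{n-1}(\sceil{x}{t})\,dx$, then apply the Dominated Convergence Theorem with a dominating function built from the global bound on $f_{n-1}$ (used on a compact initial segment) and the eventual monotonicity of $f_{n-1}$ from Lemma \ref{gross1}(2) (used on the tail, where $\sceil{x}{t}\geq x$ makes $f_{n-1}(\sceil{x}{t})\leq f_{n-1}(x)$). The only cosmetic difference is that you explicitly bound the contribution of $\int_0^{t(n-2)}g_t\,dx$ and show it vanishes, whereas the paper absorbs this into the limit implicitly; both are fine.
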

\begin{proof}
Manipulating the sum, we have
\begin{align*}
    t^n \sum_{w=n-1}^{\infty} w^{n-1} \left( \frac{1}{(1 - e^{-2tw})^{n-1}} - 1 \right) 
    &= \sum_{w=n-1}^{\infty}t^{n} w^{n-1} \left( \frac{1}{(1 - e^{-2tw})^{n-1}} - 1 \right)  \\
    &= \sum_{w=n-1}^{\infty} \int_{w-1}^w t^{n} \lceil w' \rceil ^{n-1} \left( \frac{1}{(1 - e^{-2t\lceil w' \rceil})^{n-1}} - 1 \right) dw' \\
    &= \int_{n-2}^\infty t^{n} \lceil w' \rceil ^{n-1} \left( \frac{1}{(1 - e^{-2t\lceil w' \rceil})^{n-1}} - 1 \right) dw' \\
    &= \int_{t(n-2)}^\infty (t\lceil x/t \rceil) ^{n-1} \left( \frac{1}{(1 - e^{-2t\lceil x/t \rceil})^{n-1}} - 1 \right) dx \\
    &= \int_{t(n-2)}^\infty \sceil{x}{t} ^{n-1} \left( \frac{1}{(1 - e^{-2 \sceil{x}{t}})^{n-1}} - 1 \right) dx\\
    &=\int_{t(n-2)}^\infty f_{n-1}(\sceil{x}{t}) dx.
\end{align*}
Fix $C$ such that  $f_{n-1}'(x) < 0$ for $x \geq C$ and fix $M$ such that $f_{n-1}(x)<M$. Then
\[f_{n-1}(\sceil{x}{t}) \leq M \ind_{\{x < C\}} + f_{n-1}(x) \]
for all $x > 0$. Since the integral of the right hand side is finite, we may apply dominated convergence to see that
\[\lim_{t \to 0^+} \sum_{w=n-1}^{\infty}t^n w^{n-1} \left( \frac{1}{(1 - e^{-2tw})^{n-1}} - 1 \right)dx = \int_{0}^\infty x^{n-1} \left( \frac{1}{(1 - e^{-2x})^{n-1}} - 1 \right)dx \]
which completes the proof.
\end{proof}

The following proposition will help us compute the rest of the limits we need before we can tackle $\lim_{t\to 0^+}t^nG(t)$ in its entirety.
\begin{proposition}\label{goesto0}
Suppose that $a_t(w)$ is a positive function of real $t$ and integer $w\geq 1$ such that $\lim_{t\to 0^+}a_t(w)=0$ for each $w$ and $\lim_{t\to 0^+}\sum_{w=1}^\infty a_t(w) =M<\infty$. Then \[\lim_{t\to 0^+} \sum_{w=1}^\infty \frac{a_t(w)}{w} = 0.\] 
\end{proposition}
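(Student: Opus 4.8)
The plan is to split the series $\sum_{w=1}^\infty a_t(w)/w$ into a finite head and an infinite tail, controlling each separately. Fix $\varepsilon > 0$. Since $\lim_{t \to 0^+} \sum_{w=1}^\infty a_t(w) = M < \infty$, the partial sums are eventually bounded: there exist $t_0 > 0$ and a constant $K$ (say $K = M+1$) such that $\sum_{w=1}^\infty a_t(w) \leq K$ for all $t \in (0,t_0)$. For the tail, observe that for any cutoff $W$,
\[
\sum_{w=W+1}^\infty \frac{a_t(w)}{w} \leq \frac{1}{W+1}\sum_{w=W+1}^\infty a_t(w) \leq \frac{K}{W+1},
\]
using positivity of $a_t(w)$. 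Choosing $W$ large enough that $K/(W+1) < \varepsilon/2$ makes the tail uniformly small for all $t \in (0,t_0)$.

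For the head $\sum_{w=1}^W a_t(w)/w$, this is now a \emph{finite} sum of terms each of which tends to $0$ as $t \to 0^+$ (since $\lim_{t\to 0^+} a_t(w) = 0$ for each fixed $w$, and $1/w$ is a fixed constant). Hence there exists $t_1 \in (0,t_0)$ such that for all $t \in (0,t_1)$ we have $\sum_{w=1}^W a_t(w)/w < \varepsilon/2$. Combining, for $t \in (0,t_1)$,
\[
0 \leq \sum_{w=1}^\infty \frac{a_t(w)}{w} = \sum_{w=1}^W \frac{a_t(w)}{w} + \sum_{w=W+1}^\infty \frac{a_t(w)}{w} < \frac{\varepsilon}{2} + \frac{\varepsilon}{2} = \varepsilon,
\]
where the leftmost inequality uses positivity of every term. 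Since $\varepsilon > 0$ was arbitrary, $\lim_{t\to 0^+}\sum_{w=1}^\infty a_t(w)/w = 0$.

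The only slightly delicate point is the very first step: translating the hypothesis "$\lim_{t\to 0^+}\sum_w a_t(w)$ exists and is finite" into a genuine uniform bound $\sum_w a_t(w) \leq K$ on a punctured neighborhood of $0$. This is immediate from the definition of the limit (take $K = M+1$ and shrink the neighborhood accordingly), but it is the structural crux: it is exactly the uniform control that lets the tail estimate work simultaneously for all small $t$, which is what makes the head/tail decomposition go through. No appeal to dominated convergence is needed here — the argument is an elementary $\varepsilon/2$ split — though one could alternatively phrase it via a dominating sequence if one first upgraded the hypothesis to pointwise monotonicity in $t$, which is not assumed.
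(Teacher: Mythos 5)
Your proof is correct and follows essentially the same head-and-tail $\varepsilon/2$ split as the paper's: bound the tail by pulling out $1/(W+1)$ and using a uniform bound on $\sum_w a_t(w)$, then shrink $t$ to kill the finite head. In fact you are slightly more careful than the paper on one point: you explicitly replace the limit value $M$ with a genuine uniform bound $K = M+1$ valid on a punctured neighborhood of $0$, whereas the paper's displayed chain implicitly uses $\sum_{w=1}^\infty a_t(w) \leq M$ for all small $t$, which is only guaranteed up to an arbitrarily small additive slack.
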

\begin{proof}
Let $\epsilon>0$ be arbitrary and let $k>2\frac{M}{\epsilon}-1$ be an integer. Since $\lim_{t\to 0^+} a_t(w)=0$ for each $w$, there exists some $T$ such that for all $0<t<T$ and all $w\leq k$, $a_t(w)\leq \frac{\epsilon}{2k}$. Thus for $t<T$, we have
\begin{align*}
    \sum_{w=1}^\infty \frac{a_t(w)}{w}&=\sum_{w=1}^k \frac{a_t(w)}{w}+\sum_{w=k+1}^\infty \frac{a_t(w)}{w}\\
    &\leq\sum_{w=1}^k a_t(w) +\frac{1}{k+1}\sum_{w=k+1}^\infty a_t(w)\leq \sum_{w=1}^k a_t(w) +\frac{1}{k+1}\sum_{w=1}^\infty a_t(w)\\
    &\leq \sum_{w=1}^k \frac{\epsilon}{2k} +\frac{M}{k+1}\leq \frac{\epsilon}{2}+\frac{\epsilon}{2}=\epsilon.
\end{align*}
Therefore $\lim_{t\to 0^+} \sum_{w=1}^\infty \frac{a_t(w)}{w} = 0.$
\end{proof}

\begin{proposition}
\label{vanishing-terms}
Fix $0 \leq k < n -1$. Then \[
\lim_{t \to 0^+} t^n \sum_{w=n-1}^{\infty}w^k \left( \frac{1}{(1 - e^{-2tw})^{n-1}} - 1 \right) = 0.\]
\end{proposition}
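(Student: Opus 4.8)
The plan is to mimic the structure of the proof of Proposition \ref{leading-term}, converting the sum into an integral of $f$ evaluated at a scaled ceiling, but now with the exponent on $w$ reduced from $n-1$ to $k < n-1$. Concretely, for $0 \le k < n-1$ write $t^n w^k = t^{n-k} \cdot (tw)^k$, and compare the sum $\sum_{w \ge n-1} t^n w^k\left(\frac{1}{(1-e^{-2tw})^{n-1}}-1\right)$ to an integral. Following the same Riemann-sum-to-integral manipulation as in Proposition \ref{leading-term}, one gets
\[
t^n \sum_{w=n-1}^\infty w^k\left(\frac{1}{(1-e^{-2tw})^{n-1}}-1\right) = t^{n-1-k}\int_{t(n-2)}^\infty \sceil{x}{t}^{\,k}\left(\frac{1}{(1-e^{-2\sceil{x}{t}})^{n-1}}-1\right)dx,
\]
so it suffices to show the integral on the right stays bounded as $t \to 0^+$, since then the prefactor $t^{n-1-k} \to 0$ kills it. To bound the integral, I would split $[0,\infty)$ into a neighborhood of $0$ and its complement: near $0$, Lemma \ref{limit} shows $x^{n-1}\left(\frac{1}{(1-e^{-2x})^{n-1}}-1\right)$ has a removable singularity, hence $\frac{1}{(1-e^{-2x})^{n-1}}-1 = O(x^{-(n-1)})$, so $x^k\left(\frac{1}{(1-e^{-2x})^{n-1}}-1\right) = O(x^{k-(n-1)})$ which is integrable near $0$ precisely because $k - (n-1) > -1$; for large $x$, the bound $\frac{1}{(1-e^{-2x})^{n-1}}-1 \le \frac{(n-1)2^{n-1}}{e^{2x}}$ from the proof of Lemma \ref{gross1}(3) makes $x^k$ times it exponentially decaying. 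Combined with the property $f(\sceil{x}{t}) \le f(x) + M\ind_{\{x < C\}}$-type domination used before (applied to $g_k(x) := x^k(\frac{1}{(1-e^{-2x})^{n-1}}-1)$, which by the same argument as Lemma \ref{gross1}(1)-(3) is positive, bounded, eventually decreasing, and integrable on $(0,\infty)$), this gives a uniform-in-$t$ bound on $\int_{t(n-2)}^\infty \sceil{x}{t}^k(\frac{1}{(1-e^{-2\sceil{x}{t}})^{n-1}}-1)\,dx$.

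Alternatively — and perhaps cleaner — I would avoid re-deriving the ceiling-function machinery and instead bound the sum directly. Since each term is positive, split $\sum_{w=n-1}^\infty = \sum_{n-1 \le w \le 1/t} + \sum_{w > 1/t}$. For $w \le 1/t$, use that $\frac{1}{(1-e^{-2tw})^{n-1}}-1 \le \frac{C_n}{(tw)^{n-1}}$ (valid for $tw \le$ some constant, from the removable singularity), so these terms are $\le C_n t^n w^k (tw)^{-(n-1)} = C_n t^{n-1-k} \cdot t^{?}$... actually $t^n w^k (tw)^{-(n-1)} = t \cdot w^{k-(n-1)}$, and summing $w^{k-(n-1)}$ over $1 \le w \le 1/t$ gives roughly $(1/t)^{k-(n-1)+1} = t^{n-2-k}$ when $k > n-2$, or $O(1)$/$O(\log(1/t))$ otherwise, so the head contributes $O(t^{n-1-k})$ (up to a log), which $\to 0$. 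For $w > 1/t$, use $\frac{1}{(1-e^{-2tw})^{n-1}}-1 \le \frac{(n-1)2^{n-1}}{e^{2tw}}$ once $tw \ge \frac{\log 2}{2}$; then $t^n \sum_{w > 1/t} w^k e^{-2tw} \le t^n \int_0^\infty x^k e^{-2tx}\,dx \cdot (\text{const}) = t^n \cdot \Gamma(k+1)(2t)^{-(k+1)} \cdot \text{const} = O(t^{n-1-k}) \to 0$ since $n - 1 - k \ge 1 > 0$. Both regimes vanish, so the whole limit is $0$.

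The main obstacle is just bookkeeping: making sure the split points (at $x \sim$ const versus $x \sim 1/t$, or the removable-singularity threshold versus $\frac{\log 2}{2}$) are handled consistently, and correctly tracking the power of $t$ that survives — the key inequality driving everything is $n - 1 - k \ge 1$, i.e. $k \le n - 2$, which is exactly the hypothesis $k < n-1$ for integer $k$. I would present the second (direct summation) approach since it reuses only Lemma \ref{limit} and the exponential-decay estimate from Lemma \ref{gross1}(3), and sidesteps re-invoking the scaled-ceiling dominated-convergence argument; the first approach is a fine fallback if one prefers parallelism with Proposition \ref{leading-term}. In either case the proof is short once the estimate $\frac{1}{(1-e^{-2x})^{n-1}}-1 = O(\min(x^{-(n-1)}, e^{-2x}))$ is in hand.
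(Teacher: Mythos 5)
Your preferred (second) approach is correct and genuinely different from the paper's. The paper argues very softly: since $k \le n-2$ and $w \ge n-1 \ge 1$, one has $w^k \le w^{n-1}/w$, so the sum is bounded above by $\sum_w a_t(w)/w$ with $a_t(w) = t^n w^{n-1}\bigl(\frac{1}{(1-e^{-2tw})^{n-1}}-1\bigr)$; Proposition~\ref{leading-term} supplies $\lim_{t\to 0^+}\sum_w a_t(w) < \infty$, each $a_t(w)\to 0$ pointwise, and Proposition~\ref{goesto0} then gives $\sum_w a_t(w)/w \to 0$. That is a two-line proof which reuses the work already done. Your direct split at $w\sim 1/t$, using the power-law bound $\frac{1}{(1-e^{-2tw})^{n-1}}-1 = O((tw)^{-(n-1)})$ on the head and the exponential bound from Lemma~\ref{gross1}(3) on the tail, also works and is self-contained; it trades the abstract Proposition~\ref{goesto0} for explicit estimates, and as you note the crucial surviving power is $t^{n-1-k}$ (or $t\log(1/t)$ when $k=n-2$), which vanishes exactly because $k\le n-2$. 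The minor monotonicity bookkeeping in comparing $\sum_{w>1/t} w^k e^{-2tw}$ to the integral $\int x^k e^{-2tx}\,dx$ is fine up to a constant factor $2^k$ and you acknowledge it.

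Your first (fallback) approach, however, contains a real error. You claim $g_k(x) := x^k\bigl(\frac{1}{(1-e^{-2x})^{n-1}}-1\bigr) = O(x^{k-(n-1)})$ near $0$ is integrable there "precisely because $k-(n-1) > -1$." But the hypothesis is $0\le k < n-1$, i.e.\ $k\le n-2$, which gives $k-(n-1)\le -1$: the singularity is at least as bad as $x^{-1}$ and $g_k$ is \emph{not} integrable near $0$. Consequently the integral $\int_{t(n-2)}^\infty \sceil{x}{t}^{\,k}\bigl(\frac{1}{(1-e^{-2\sceil{x}{t}})^{n-1}}-1\bigr)\,dx$ does not stay bounded as $t\to 0^+$; it grows like $t^{k-n+2}$ when $k<n-2$ and like $\log(1/t)$ when $k=n-2$. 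The conclusion still holds because the prefactor $t^{n-1-k}$ more than compensates, but the stated reduction ("it suffices to show the integral stays bounded") is false and would need to be replaced by a quantitative bound on the rate of divergence — at which point you have essentially re-derived your second argument. So the second approach should be the one you present, and the first should be dropped or repaired.
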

\begin{proof}
The observation that $0\leq  \sum_{w=n-1}^{\infty}t^n w^k \left( \frac{1}{(1 - e^{-2tw})^{n-1}} - 1 \right)\leq   \sum_{w=n-1}^{\infty}\frac{1}{w} t^n w^{n-1} \left( \frac{1}{(1 - e^{-2tw})^{n-1}} - 1 \right)$ allows us to apply Propositions \ref{goesto0} and \ref{leading-term} and arrive at the desired conclusion.
\end{proof}

With Propositions \ref{leading-term} and \ref{vanishing-terms} in hand, we can move forward. We break $\binom{w}{n-1}$ into a polynomial in $w$, as follows:
\[\binom{w}{n-1} = \frac{w!}{(n-1)! (w-n+1)!} = \frac{w(w-1)\cdots(w-n+2)}{(n-1)!} = \frac{w^{n-1}}{(n-1)!} + \sum_{k=0}^{n-2} a_k w^k.\] 
We write
\begin{align}
\lim_{t \to 0^+} t^n G_1(t)  &=  \lim_{t \to 0^+} t^n \sum_{w=n-1}^{\infty} \binom{w}{n-1}\left( \frac{1}{(1 - e^{-2tw})^{n-1}} - 1 \right) \nonumber \\
&= \lim_{t \to 0^+} \left[ \frac{t^n}{(n-1)!}\sum_{w=n-1}^{\infty}w^{n-1}\left( \frac{1}{(1 - e^{-2tw})^{n-1}} - 1 \right) + t^n \sum_{k=0}^{n-2}a_k  \sum_{w=n-1}^{\infty}w^{k}\left( \frac{1}{(1 - e^{-2tw})^{n-1}} - 1 \right)\right] \nonumber \\
&= \frac{1}{(n-1)!}\int_{0}^\infty x^{n-1} \left( \frac{1}{(1 - e^{-2x})^{n-1}} - 1 \right)dx. \label{eq:int-G1}
\end{align}
The final equality was obtained by applying Propositions \ref{leading-term} and \ref{vanishing-terms}.

Now we move on to $G_2(t)$. We have
\begin{align*}
    G_2(t) &=  \sum_{q=1}^\infty \sum_{p=0}^\infty  \binom{n + p - 2}{p}\binom{n+q-2}{q-1} e^{-2 t q(p+n-1)} \\
    &= \sum_{q=1}^\infty\binom{n+q-2}{q-1} e^{-2tq(n-1)}\sum_{p=0}^\infty  \binom{n + p - 2}{p} (e^{-2 t q})^p \\
    &= \sum_{q=1}^\infty\binom{n+q-2}{q-1} \frac{e^{-2tq(n-1)}}{(1 - e^{-2tq})^{n-1}} \\
    &= \sum_{q=1}^\infty\binom{n+q-2}{n-1} \frac{e^{-2tq(n-1)}}{(1 - e^{-2tq})^{n-1}}. 
\end{align*}
This next lemma plays a role in our analysis of $G_2(t)$ which is analogous to the role of Lemma \ref{gross1} in our analysis of $G_1(t)$.
\begin{lemma}
Fix integers $r \geq 0$ and $s \geq 1$, and consider the function
\[g_{r,s}(x) = x^r \frac{e^{-2xs}}{(1-e^{-2x})^{s}} \] defined for $x > 0$.
Then
\begin{enumerate}
    \item $g_{r,s}(x) > 0.$
    \item $g'_{r,r} (x)< 0$ .
    \item If $r \geq s$, $g_{r,s}(x)$ is bounded and $\int_{0}^\infty g_{r,s}(x) dx < \infty$.
    \item If $r \geq s + 1$,
    $\lim_{x \to 0} g_{r,s}(x) = 0$.
\end{enumerate}
\end{lemma}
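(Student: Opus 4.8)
The plan is to mirror the structure of the proof of Lemma~\ref{gross1}, treating the four claims in sequence since each is a modest variation on a computation already carried out there. Claim (1) is immediate: for $x>0$ we have $0<e^{-2x}<1$, so $e^{-2xs}>0$ and $(1-e^{-2x})^s>0$, hence $g_{r,s}(x)>0$.

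For claim (2), I would compute $g_{r,r}'(x)$ directly by the product and chain rules. Writing $g_{r,r}(x)=x^r e^{-2xr}(1-e^{-2x})^{-r}$, differentiation gives three terms: $r x^{r-1}e^{-2xr}(1-e^{-2x})^{-r}$, $-2r x^r e^{-2xr}(1-e^{-2x})^{-r}$, and $-2r x^r e^{-2xr}e^{-2x}(1-e^{-2x})^{-r-1}$. Factoring out the positive quantity $r x^{r-1}e^{-2xr}(1-e^{-2x})^{-r-1}$, the sign of $g_{r,r}'(x)$ is that of the bracket $(1-e^{-2x}) - 2x(1-e^{-2x}) - 2x e^{-2x} = (1-e^{-2x}) - 2x$. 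So it suffices to show $1-e^{-2x}-2x<0$ for all $x>0$, which is clear since $h(x)=1-e^{-2x}-2x$ satisfies $h(0)=0$ and $h'(x)=2e^{-2x}-2<0$ for $x>0$. Hence $g_{r,r}'(x)<0$ on $(0,\infty)$.

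For claim (3), assume $r\geq s$. Near $0$: using L'Hopital (as in Lemma~\ref{limit}, or directly from $\lim_{x\to0^+}\tfrac{x}{1-e^{-2x}}=\tfrac12$) we get $\lim_{x\to 0^+}g_{r,s}(x) = \lim_{x\to0^+}x^{r-s}\bigl(\tfrac{x}{1-e^{-2x}}\bigr)^s e^{-2xs} = 0$ if $r>s$ and $=2^{-s}$ if $r=s$; either way the limit is finite, so $g_{r,s}$ is bounded on a neighborhood of $0$, say on $[0,\tfrac{\log 2}{2}]$, and continuous and hence bounded on any compact subinterval of $(0,\infty)$. For large $x$: exactly as in the proof of Lemma~\ref{gross1}(3), when $x\geq\tfrac{\log 2}{2}$ we have $e^{2x}-1\geq\tfrac{e^{2x}}{2}$, so $\tfrac{1}{(1-e^{-2x})^s} = \bigl(\tfrac{e^{2x}}{e^{2x}-1}\bigr)^s \leq 2^s$, whence $g_{r,s}(x) \leq x^r e^{-2xs}\cdot 2^s$, an exponentially decaying bound. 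This simultaneously shows $g_{r,s}$ is bounded on $[\tfrac{\log 2}{2},\infty)$ and that $\int_0^\infty g_{r,s}(x)\,dx<\infty$. Claim (4) then follows from the near-$0$ computation above: if $r\geq s+1$, i.e. $r>s$, the limit $\lim_{x\to0^+}g_{r,s}(x)=0$ was already established.

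I do not expect any genuine obstacle here; the only point requiring a little care is keeping the hypotheses straight — claim (2) is stated only for the diagonal case $r=s$ (the off-diagonal derivative need not be signed), and claims (3) and (4) genuinely need $r\geq s$ and $r\geq s+1$ respectively, which is exactly what makes the powers of $x$ in the near-zero expansion nonnegative (resp.\ positive). The large-$x$ estimate is the same trick already used for $f_r$, so it can be quoted almost verbatim.
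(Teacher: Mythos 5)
Your proof is correct and follows essentially the same route as the paper's; the only cosmetic differences are that in part (2) you differentiate $g_{r,r}$ directly and reduce to the elementary inequality $1-e^{-2x}-2x<0$, where the paper instead rewrites $g_{r,r}(x)=\bigl(\tfrac{x}{e^{2x}-1}\bigr)^r$ and quotes (without proof) that $\tfrac{x}{e^{2x}-1}$ is decreasing — two formulations of the same fact — and in part (3) you keep the clean bound $g_{r,s}(x)\leq 2^s x^r e^{-2sx}$ for $x\geq\tfrac{\log 2}{2}$, whereas the paper further absorbs $x^r$ into $e^{x}$ to get a slightly weaker but still integrable bound. Both handle the near-zero behavior via the finite limit from Lemma~\ref{limit}, so no substantive gap.
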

\begin{proof}
Suppose $r\geq0$ and $s\geq1$ are integers and that $x>0$. As $x^r>0$, $e^{-2xs}>0$, and $1 - e^{-2x} > 0$, we have $g_{r,s}(x) > 0$ which shows (1). To prove (2) notice that we may write
$$g_{r,r}(x)=x^r \frac{e^{-2xr}}{(1-e^{-2x})^{r}}=\frac{x^r}{(e^{2x}-1)^r}.$$
Since the function $x/(e^{2x}-1)$ is decreasing, it follows that $g'_{r,r}(x)<0$.

For part (3), fix $M \in \R$ such that for all $x \geq M$, $e^{2x} - 1 \geq \frac{e^{2x}}{2}$ and $(e^{2x})^{1/2} \geq x^r$. Then for $x \geq M$,
\[g_{r,s}(x) = \frac{x^r}{(e^{2x} - 1)^s} \leq \frac{x^r}{\left( \frac{e^{2x}}{2} \right)^s} = 2^s \frac{x^r}{(e^{2x})^s} \leq 2^s \frac{(e^{2x})^{1/2}}{(e^{2x})^s} = \frac{2^s}{(e^{2x})^{s-1/2}}.\]
As $s \geq 1$, the right-hand side has finite integral over $[M,\infty)$ and hence $\int_M^\infty g_{r,s}(x)\ dx < \infty$. The integral of $g_{r,s}(x)$ over $[0,M]$ is also finite because we can extend $g_{r,s}(x)$ to a continuous, bounded function on this compact interval. Adding these two parts together shows that $$\int_0^\infty g_{r,s}(x)\ dx < \infty.$$

It remains to show $\lim_{x\rightarrow 0}g_{r,s}(x)=0$ whenever $r\geq s+1$. First notice that Lemma \ref{limit} can be used to show that
$$\lim_{x\rightarrow 0}\frac{x^r}{(1-e^{-2x})^s}=\lim_{x\rightarrow 0}x^{r-s}\lim_{x\rightarrow 0}\frac{x^s}{(1-e^{-2x})^s}=0\cdot2^{-n}=0.$$
Therefore, when $r\geq s+1$, we have
$$\lim_{x\rightarrow 0}x^r\frac{e^{-2sx}}{(1-e^{-2x})^s}=\lim_{x\rightarrow 0}e^{-2xs}\lim_{x\rightarrow 0}\frac{x^r}{(1-e^{-2x})^s}=1\cdot0=0$$
as we needed to show, this proves (4).
\end{proof}

Now we move on to propositions which, similarly to our strategy for analyzing $G_1(t)$, allow us to break up the binomial coefficient in $G_2(t)$ into a polynomial and analyze the resulting sums separately.
\begin{proposition}
Fix $0 \leq k < n -1$. Then \[
\lim_{t \to 0^+} t^n \sum_{q=1}^{\infty}q^k \frac{e^{-2tq(n-1)}}{(1 - e^{-2tq})^{n-1}} = 0.\] 
\end{proposition}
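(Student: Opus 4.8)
The plan is to imitate the proof of Proposition~\ref{vanishing-terms}: dominate the sum in question by a weighted (``harmonic'') average of the top-degree sum, verify the hypotheses of Proposition~\ref{goesto0} for that average, and conclude by a squeeze.

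Set $a_t(q) = t^n q^{n-1}\dfrac{e^{-2tq(n-1)}}{(1-e^{-2tq})^{n-1}}$. Writing $x = tq$ one checks immediately that $a_t(q) = t\,g_{n-1,n-1}(tq)$, where $g_{r,s}$ is the function from the lemma preceding this proposition, taken at $r = s = n-1$. Since $0 \leq k < n-1$, we have $q^k \leq q^{n-2} = q^{n-1}/q$ for every integer $q \geq 1$, hence
\[
0 \;\leq\; t^n \sum_{q=1}^{\infty} q^k \frac{e^{-2tq(n-1)}}{(1-e^{-2tq})^{n-1}} \;\leq\; \sum_{q=1}^{\infty} \frac{a_t(q)}{q}.
\]
By the squeeze theorem it suffices to show the right-hand side tends to $0$ as $t \to 0^+$, and this is exactly the conclusion of Proposition~\ref{goesto0}, provided $a_t$ satisfies its hypotheses.

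Verifying those hypotheses is the substance of the argument. Positivity of $a_t(q)$ is clear. For each fixed $q$, part~(3) of the preceding lemma (applicable since $r = s$) says $g_{n-1,n-1}$ is bounded on $(0,\infty)$, so $a_t(q) = t\,g_{n-1,n-1}(tq) \leq t \sup g_{n-1,n-1} \to 0$ as $t \to 0^+$. The remaining, and only nontrivial, point is that $\sum_{q=1}^{\infty} a_t(q)$ converges to a finite limit as $t \to 0^+$; I expect this to be the main obstacle. For it I would repeat the computation in Proposition~\ref{leading-term}: using the scaled ceiling function,
\[
\sum_{q=1}^{\infty} a_t(q) \;=\; \sum_{q=1}^{\infty}\int_{q-1}^{q} t\,g_{n-1,n-1}\big(t\lceil q'\rceil\big)\,dq' \;=\; \int_{0}^{\infty} g_{n-1,n-1}\big(\sceil{x}{t}\big)\,dx .
\]
Part~(2) of the lemma gives $g'_{n-1,n-1} < 0$, so $g_{n-1,n-1}$ is decreasing and $g_{n-1,n-1}(\sceil{x}{t}) \leq g_{n-1,n-1}(x)$, while part~(3) gives $\int_{0}^{\infty} g_{n-1,n-1}(x)\,dx < \infty$. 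Since $\sceil{x}{t} \to x$ as $t \to 0^+$ and $g_{n-1,n-1}$ is continuous on $(0,\infty)$, dominated convergence yields $\lim_{t\to 0^+}\sum_{q=1}^{\infty} a_t(q) = \int_{0}^{\infty} g_{n-1,n-1}(x)\,dx < \infty$.

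With all hypotheses of Proposition~\ref{goesto0} in hand, that proposition gives $\lim_{t\to 0^+}\sum_{q=1}^{\infty} a_t(q)/q = 0$, and the squeeze above completes the proof. (If one prefers to avoid identifying the limit, monotonicity of $g_{n-1,n-1}$ already gives $t\,g_{n-1,n-1}(tq) \leq \int_{t(q-1)}^{tq} g_{n-1,n-1}$, hence the uniform bound $\sum_{q} a_t(q) \leq \int_{0}^{\infty} g_{n-1,n-1} < \infty$ for all $t > 0$, which is all that the proof of Proposition~\ref{goesto0} actually uses.)
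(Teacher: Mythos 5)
Your proof is correct, but it takes a genuinely different route from the paper. You model the argument on Proposition~\ref{vanishing-terms}: bound the sum by $\sum_q a_t(q)/q$ with $a_t(q)=t\,g_{n-1,n-1}(tq)$, verify the hypotheses of Proposition~\ref{goesto0} (which requires you to re-derive the finiteness of $\lim_{t\to 0^+}\sum_q a_t(q)$, in effect anticipating Proposition~\ref{Gt}), and squeeze. This is sound: part~(2) of the lemma gives monotonicity, part~(3) integrability, and the scaled-ceiling computation is carried out correctly. The paper instead avoids Proposition~\ref{goesto0} altogether and argues by direct dominated convergence on the series: it writes the summand as $\frac{1}{q^{n-k}}\,g_{n,n-1}(tq)$ (note the index $n$, not $n-1$, absorbing the full power $t^nq^n$), uses $n-k\geq 2$ together with boundedness of $g_{n,n-1}$ (part~(3), since $n\geq n-1$) to dominate by a constant times $q^{-2}$, and uses part~(4) of the lemma ($\lim_{x\to 0}g_{n,n-1}(x)=0$, valid since $n\geq (n-1)+1$) to get pointwise vanishing of each term. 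What the paper's approach buys is brevity and independence: it never needs the limiting value of the top-degree sum, and the pointwise vanishing is handed to it by part~(4). What your approach buys is consistency of method with the $G_1$ side, and it avoids invoking part~(4); but it is strictly longer and, if presented in the paper's ordering, would have to either re-derive Proposition~\ref{Gt} on the spot (as you do) or swap the order of the two propositions. Either way the mathematics checks out.
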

\begin{proof}
We recognize that this expression can be rewritten in terms of $g_{n,n-1}(tq)$: 
\[ t^n\sum_{q=1}^{\infty}q^k \frac{e^{-2tq(n-1)}}{(1 - e^{-2tq})^n} = \sum_{q=1}^{\infty}\frac{1}{q^{n-k}}(tq)^n \frac{e^{-2tq(n-1)}}{(1 - e^{-2tq})^{n-1}} = \sum_{q=1}^\infty \frac{1}{q^{n-k}} g_{n,n-1}(tq).\]
Since $g_{n,n-1}$ is bounded and $n-k \geq 2$, the summand is dominated by some constant times  $\frac{1}{q^{2}}$. Therefore we can apply dominated convergence to conclude that the limit is 0, since the summand converges pointwise to 0 by the previous lemma.
\end{proof}
\begin{proposition}\label{Gt} 
\[\lim_{t \to 0^+} t^n \sum_{q=1}^{\infty}q^{n-1} \frac{e^{-2tq(n-1)}}{(1 - e^{-2tq})^{n-1}} = \int_{0}^\infty x^{n-1} \frac{e^{-2x(n-1)}}{(1 - e^{-2x})^{n-1}} dx.\]
\end{proposition}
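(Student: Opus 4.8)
The plan is to follow the template of the proof of Proposition~\ref{leading-term}: rewrite the series as an integral of $g_{n-1,n-1}$ composed with the scaled ceiling function, and then pass to the limit by dominated convergence. First I would observe that $t^n q^{n-1} = t\,(tq)^{n-1}$, so that
\[
t^n \sum_{q=1}^{\infty} q^{n-1} \frac{e^{-2tq(n-1)}}{(1-e^{-2tq})^{n-1}} = t \sum_{q=1}^{\infty} g_{n-1,n-1}(tq),
\]
where $g_{n-1,n-1}(x) = x^{n-1} \frac{e^{-2x(n-1)}}{(1-e^{-2x})^{n-1}}$ is the function from the lemma above concerning the $g_{r,s}$ (with $r = s = n-1$). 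Since $\lceil w' \rceil = q$ for $w' \in (q-1,q]$, each summand equals $\int_{q-1}^{q} g_{n-1,n-1}(t\lceil w' \rceil)\,dw'$; summing over $q \geq 1$ and substituting $x = tw'$ turns the left-hand side into
\[
t\int_{0}^{\infty} g_{n-1,n-1}(t\lceil w'\rceil)\,dw' = \int_{0}^{\infty} g_{n-1,n-1}(\sceil{x}{t})\,dx.
\]

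Next I would verify the hypotheses of the Dominated Convergence Theorem for the integrand $g_{n-1,n-1}(\sceil{x}{t})$ as $t \to 0^+$. Property (2) of the scaled ceiling function gives $\sceil{x}{t} \to x$ for each fixed $x > 0$, and $g_{n-1,n-1}$ is continuous on $(0,\infty)$, so the integrand converges pointwise to $g_{n-1,n-1}(x)$. For the dominating function I would invoke part (2) of the lemma above, which says that $g_{n-1,n-1} = g_{r,r}$ with $r = n-1$ is strictly decreasing on all of $(0,\infty)$; combined with $\sceil{x}{t} \geq x$ (property (1) of the scaled ceiling), this yields $g_{n-1,n-1}(\sceil{x}{t}) \leq g_{n-1,n-1}(x)$ for every $x > 0$ and every $t > 0$. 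By part (3) of that lemma (applicable since $r = s = n-1$, so $r \geq s$), $g_{n-1,n-1}$ is integrable on $(0,\infty)$, so dominated convergence applies and gives
\[
\lim_{t\to 0^+} \int_{0}^{\infty} g_{n-1,n-1}(\sceil{x}{t})\,dx = \int_{0}^{\infty} g_{n-1,n-1}(x)\,dx = \int_{0}^{\infty} x^{n-1}\frac{e^{-2x(n-1)}}{(1-e^{-2x})^{n-1}}\,dx,
\]
which is the assertion.

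I do not anticipate a genuine obstacle; the argument is a slightly cleaner version of the proof of Proposition~\ref{leading-term}. The points to watch are the index bookkeeping — here the series starts at $q = 1$, so the resulting integral runs over all of $(0,\infty)$ with no shifted lower limit — and the observation that, because $g_{n-1,n-1}$ is decreasing on the whole half-line (unlike $f_{n-1}$, which was only eventually decreasing and therefore forced the auxiliary term $M\ind_{\{x < C\}}$ in Proposition~\ref{leading-term}), the dominating function can be taken to be $g_{n-1,n-1}$ itself with no correction term. One should also note in passing that $g_{n-1,n-1}$ need not vanish at $0$ (for $n = 2$ it tends to $\tfrac12$), but this is harmless: only its boundedness near $0$ is used, and that is part of the conclusion of part (3) of the lemma above.
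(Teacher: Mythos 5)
Your argument is correct and is essentially the same as the paper's: rewrite the sum as $\int_0^\infty g_{n-1,n-1}(\sceil{x}{t})\,dx$ via the scaled ceiling function, then dominate by $g_{n-1,n-1}$ itself (using part (2) of the $g_{r,s}$ lemma, that $g_{n-1,n-1}$ is decreasing on all of $(0,\infty)$) and pass to the limit by dominated convergence. Your closing remarks — that no $M\ind_{\{x<C\}}$ correction is needed here, unlike in Proposition~\ref{leading-term}, and that $g_{n-1,n-1}$ need not vanish at $0$ — are accurate and consistent with the paper's reasoning.
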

\begin{proof}
Write
\begin{align*}
    t^n \sum_{q=1}^{\infty}q^{n-1} \frac{e^{-2tq(n-1)}}{(1 - e^{-2tq})^{n-1}}
    &=  \sum_{q=1}^{\infty}\int_{q-1}^q t^n \lceil q' \rceil^{n-1} \frac{e^{-2t\lceil q' \rceil(n-1)}}{(1 - e^{-2t \lceil q' \rceil})^{n-1}} dq' \\
    &= \int_{0}^\infty t^n \lceil q' \rceil^{n-1} \frac{e^{-2t\lceil q' \rceil(n-1)}}{(1 - e^{-2t \lceil q' \rceil})^{n-1}} dq' \\
    &= \int_{0}^\infty (t \lceil x/t \rceil)^{n-1} \frac{e^{-2t\lceil x/t \rceil(n-1)}}{(1 - e^{-2t \lceil x/t \rceil})^{n-1}} dx \\
    &= \int_{0}^\infty \sceil{x}{t}^{n-1} \frac{e^{-2\sceil{x}{t}(n-1)}}{(1 - e^{-2\sceil{x}{t}})^{n-1}} dx.
\end{align*}
The integrand is exactly $g_{n-1,n-1}(\sceil{x}{t})$, and is thus dominated by $g_{n-1,n-1}$
since $g_{n-1,n-1}$ is a decreasing function. Therefore we can apply dominated convergence to find that
\[\lim_{t \to 0^+}  t^n \sum_{q=1}^{\infty}q^{n-1} \frac{e^{-2tq(n-1)}}{(1 - e^{-2tq})^{n-1}} = \int_{0}^\infty x^{n-1} \frac{e^{-2x(n-1)}}{(1 - e^{-2x})^{n-1}} dx.\]
\end{proof}
We can expand $\binom{n+q-2}{n-1}$ as \[\binom{n+q-2}{n-1} = \frac{q^{n-1}}{(n-1)!} + \sum_{k=0}^{n-2} b_k q^k.\]
Therefore,
\begin{align*}
\lim_{t \to 0^+} t^n G_2(t) 
&= \lim_{t \to 0^+} t^n \sum_{q=1}^\infty \binom{n+q-2}{n-1} \frac{e^{-2tq(n-1)}}{(1 - e^{-2tq})^{n-1}} \\
&= \lim_{t \to 0^+} \left[ \frac{1}{(n-1)!} t^n \sum_{q=1}^\infty q^{n-1} \frac{e^{-2tq(n-1)}}{(1 - e^{-2tq})^{n-1}} + t^n \sum_{k=0}^{n-2} b_k \sum_{q=1}^\infty q^k \frac{e^{-2tq(n-1)}}{(1 - e^{-2tq})^{n-1}}  \right] \\
&= \frac{1}{(n-1)!} \int_{0}^\infty x^{n-1} \frac{e^{-2x(n-1)}}{(1 - e^{-2x})^{n-1}} dx \\
&= \frac{1}{(n-1)!} \int_{0}^\infty x^{n-1} \frac{1}{(e^{2x} - 1)^{n-1}} dx.
\end{align*}
Next, we put the two parts of $G$ back together in the limit. This gives us
\begin{align*}
    \lim_{t \to 0^+} t^n G(t) &= \lim_{t \to 0^+} t^n G_1(t) + \lim_{t \to 0^+} t^n G_2(t) \\
    &= \frac{1}{(n-1)!} \left[\int_{0}^\infty x^{n-1} \left( \frac{1}{(1 - e^{-2x})^{n-1}} - 1 \right)dx + \int_{0}^\infty x^{n-1} \frac{1}{(e^{2x} - 1)^{n-1}} dx \right] \\
    &= \frac{1}{(n-1)!}\int_{0}^\infty x^{n-1} \left( \frac{1}{(1 - e^{-2x})^{n-1}} - 1 + \frac{1}{(e^{2x} - 1)^{n-1}} \right) dx. 
\end{align*}
We now have an expression for $\lim_{t\to 0^+} G(t)$ in terms of an integral so we could have chosen to stop here. Instead, we will press on and apply a few more tricks in order to arrive at 
the expression in Theorem \ref{main}. One reason we prefer the expression in Theorem \ref{main} is its similarity to a closely related result in \cite{StanTar}.

To continue on our path of manipulating the combined integral, we apply integration by parts with 
\begin{align*}
    u &= \frac{1}{(1 - e^{-2x})^{n-1}} - 1 + \frac{1}{(e^{2x} - 1)^{n-1}},\\
    dv&=x^{n-1}dx,\\
    du &= -(n-1)\left(\frac{2e^{-2x}}{(1-e^{-2x})^n} + \frac{2e^{2x}}{(e^{2x} - 1)^n}\right) dx = -2(n-1)\frac{e^{(n-2)x} + e^{-(n-2)x}}{(e^x - e^{-x})^n}dx = -\frac{n-1}{2^{n-2}} \frac{\cosh((n-2)x)}{\sinh(x)^n} dx,
\end{align*}
and $v=n^{-1}x^n$.
This gives us
\begin{align*}
    \lim_{t \to 0^+} t^n G(t) &= 
    \frac{1}{n!} x^n \left(\frac{1}{(1 - e^{-2x})^{n-1}} - 1 + \frac{1}{(e^{2x} - 1)^{n-1}}   \right)\bigg|_{0}^\infty + \frac{n-1}{2^{n-2}n!}\int_0^\infty x^n \frac{\cosh((n-2)x)}{\sinh(x)^n} dx \\
    &= \frac{n-1}{2^{n-2}n!}\int_0^\infty \frac{x^n \cosh((n-2)x)}{\sinh(x)^n} dx.
\end{align*}
The boundary term clearly vanishes at $\infty$ because $x^n \left( \frac{1}{(1 - e^{-2x})^{n-1}} - 1 \right)$ and $\frac{x^n}{(e^{2x} - 1)^{n-1}}$ each vanish at $\infty$. To see that it vanishes at 0, apply L'Hopital's rule to the quotients $\frac{x}{1-e^{-2x}}$ and $\frac{x}{e^{2x}-1}$.
Since the integrand is even, we further have
\begin{align*}
     \lim_{t \to 0^+} t^n G(t)
    &= \frac{n-1}{2^{n-2}n!}\int_0^\infty \frac{x^n \cosh((n-2)x)}{\sinh(x)^n} dx \\
    &= \frac{n-1}{2^{n-1}n!}\int_{-\infty}^\infty  \frac{x^n \cosh((n-2)x)}{\sinh(x)^n} dx \\
    &= \frac{n-1}{2^{n-2}n!} \left( \int_{-\infty}^\infty  \frac{x^n}{\sinh(x)^n} e^{x(n-2)} dx + \int_{-\infty}^\infty  \frac{x^n}{\sinh(x)^n} e^{-x(n-2)} dx \right).
\end{align*}
As $\frac{x}{\sinh(x)}$ is an even function, the value of the first integral is unchanged if we change the $e^{x(n-2)}$ in the integrand to $e^{-x(n-2)}$. Thus,
\begin{align*}
    \lim_{t \to 0^+} t^n G(t) &= \frac{n-1}{2^{n-2}n!} \left( \int_{-\infty}^\infty  \frac{x^n}{\sinh(x)^n} e^{-x(n-2)} dx + \int_{-\infty}^\infty  \frac{x^n}{\sinh(x)^n} e^{-x(n-2)} dx \right) \\
    &=\frac{n-1}{2^{n-1}n!} \int_{-\infty}^\infty  \left( \frac{x}{\sinh(x)} \right)^n e^{-x(n-2)} dx\\
    &= \frac{2\pi^n}{(n-1)!}\cdot \frac{n-1}{ (2\pi)^n n} \int_{-\infty}^\infty  \left( \frac{x}{\sinh(x)} \right)^n e^{-x(n-2)} dx \\
    &= \volSn \frac{n-1}{ (2\pi)^n n}\int_{-\infty}^\infty  \left( \frac{x}{\sinh(x)} \right)^n e^{-x(n-2)} dx. 
\end{align*}
Therefore, by the Tauberian Theorem due to Karamata, we obtain the following limit 
    \[\lim_{\la \to \infty} \frac{N(\la)}{\la^n}  =  \volSn \frac{(n-1)}{n (2\pi)^n \Gamma(n+1) }  \int_{-\infty}^\infty  \left( \frac{x}{\sinh(x)} \right)^n e^{-x(n-2)} dx,\]
and complete the proof of Theorem \ref{main}.

\section{The Formula for Functions versus the Formula for Forms}\label{relating}

In \cite{StanTar}, Stanton and Tartakoff prove the following formula, reminiscent of Weyl's law, for the Kohn Laplacian on CR manifolds of hypersurface type.

\begin{thm}[{\cite[Theorem 6.1]{StanTar}}]
    Let $M$ be a CR submanifold of $\mathbb{C}^n$, $n \geq 3$. 
    Let $N(\la)$ be the eigenvalue counting function of $\square_b$ on $M$ acting on $(p,q)$ forms, $0 \leq p < n$, $0 < q < n-1$. Then we have the asymptotic equivalance
    \[\lim_{\la \to \infty} \frac{N(\la)}{\la^n} = c_n \textup{vol}(M) \la^n, \]
    where
    \[c_n = \binom{n-1}{p} \binom{n-1}{q} \frac{1}{(2\pi)^{n} \Gamma(n+1)} \int_{-\infty}^\infty \left(\frac{\tau}{\sinh \tau}\right)^{n-1} e^{-(n-1-2q)\tau} \,d\tau, \]
    and the Kohn Laplacian and the volume of $M$ are defined with respect to a Levi metric.
\end{thm}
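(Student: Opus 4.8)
This statement is the theorem of Stanton and Tartakoff, and I would prove it by the heat-equation method on which their argument rests, packaged through the Heisenberg (parabolic) calculus. The plan is: (i) reduce the Weyl asymptotics for $N(\lambda)$ to the small-time behaviour of the heat trace $\Theta(t) = \operatorname{Tr}\bigl(e^{-t\square_b}\bigr)$ of $\square_b$ on $(p,q)$-forms, restricted to the orthogonal complement of $\ker\square_b$; (ii) build a heat parametrix whose principal part is the heat kernel of the osculating Heisenberg model at each point; (iii) evaluate that model heat kernel explicitly via the representation theory of the Heisenberg group and integrate over $M$; (iv) feed the resulting expansion $\Theta(t) \sim c_n\,\Gamma(n+1)\,\operatorname{vol}(M)\,t^{-n}$ into Karamata's theorem (Theorem \ref{karamata}), applied to the positive eigenvalues of $\square_b$ counted with multiplicity, to conclude the stated Weyl asymptotics $N(\lambda)/\lambda^n \to c_n\operatorname{vol}(M)$. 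The hypotheses enter exactly at steps (i)--(ii): strict pseudoconvexity is what makes the Levi form a metric, and the restriction $0 < q < n-1$ is Kohn's condition $Z(q)$, under which $\square_b$ on $(p,q)$-forms is subelliptic with a gain of one derivative, so that $e^{-t\square_b}$ is smoothing, $\ker\square_b$ is finite-dimensional on compact $M$, and $\Theta(t)$ is finite with an asymptotic expansion. This is precisely the range that the main body of the present paper cannot use, which is why the case $q=0$ requires the independent argument given above.

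For step (ii) I would work in CR-adapted (Heisenberg) coordinates around a point $m\in M$, in which the contact form and the complex tangent distribution agree to second order with those of the Heisenberg group $\mathbb{H}^{n-1} = \C^{n-1}\times\R$, and rescale parabolically: weight $1$ on the $2(n-1)$ horizontal directions, weight $2$ on the Reeb direction, and $t \mapsto \delta^{2} t$. Under this rescaling $\square_b$ on $\Lambda^{p,q}$ converges as $\delta \to 0$ to the model Kohn Laplacian $\square_b^{\,\mathbb H}$ on $\mathbb{H}^{n-1}$ acting on a trivial bundle of rank $\binom{n-1}{p}\binom{n-1}{q}$ --- the $(p,q)$ exterior powers of the flat CR structure, the holomorphic ($p$) slots being inert for $\dbarb$ and contributing only the multiplicity $\binom{n-1}{p}$. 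Subellipticity in the range $Z(q)$ makes this convergence hold with error terms controlled uniformly in $m$ and $\delta$, via the Rothschild--Stein nilpotent approximation (equivalently, the Volterra--Heisenberg symbol calculus of Beals--Greiner and Taylor, which produces a parametrix for $\partial_t + \square_b$ directly). The upshot is that the on-diagonal heat kernel of $\square_b$ at $m$ has leading small-time behaviour $\delta^{-2n}$ (with $\delta = \sqrt t$) times the on-diagonal heat kernel of $\square_b^{\,\mathbb H}$ at the origin; integrating over $M$ against the Levi volume form, and using that the model contribution does not depend on $m$, yields $\Theta(t) \sim \operatorname{vol}(M)\,t^{-n}\,\theta_{p,q}$ with $\theta_{p,q}$ the origin-diagonal heat trace density of the Heisenberg model.

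Step (iii) is the computational heart. Decompose $L^2(\mathbb{H}^{n-1})$ over the Bargmann--Schrödinger representations $\pi_\lambda$, $\lambda\in\R\setminus\{0\}$; under $\pi_\lambda$ the operators $\dbarb$ and $\dbarbstar$ on the flat CR structure become, up to the factor $|\lambda|$, creation/annihilation operators of an $(n-1)$-dimensional harmonic oscillator, so $\square_b^{\,\mathbb H}$ on the $q$-form component is $2|\lambda|$ times a diagonal operator whose spectrum is indexed by $\alpha\in\mathbb{N}^{n-1}$ together with a choice of which $q$ of the $n-1$ antiholomorphic slots are filled. Taking the operator trace and summing the oscillator eigenvalues by Mehler's formula collapses each of the $n-1$ oscillator factors to $\bigl(2\sinh(2|\lambda| t)\bigr)^{-1}$, the choice of filled slots produces (after symmetrizing over slot occupations and over $\lambda\mapsto -\lambda$) the factor $\binom{n-1}{q}\,e^{-2|\lambda| t\,(n-1-2q)}$, the Plancherel measure contributes $c\,|\lambda|^{n-1}\,d\lambda$, and the inert holomorphic part contributes $\binom{n-1}{p}$. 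Substituting $\tau = 2\lambda t$ turns
\[
\int_{\R} |\lambda|^{\,n-1}\,\bigl(2\sinh(2|\lambda| t)\bigr)^{-(n-1)}\,e^{-2|\lambda| t\,(n-1-2q)}\,d\lambda
\]
into $t^{-n}$ times $\binom{n-1}{p}\binom{n-1}{q}\,(2\pi)^{-n}\int_{-\infty}^\infty (\tau/\sinh\tau)^{n-1}\,e^{-(n-1-2q)\tau}\,d\tau$ up to the normalizing constants, which is exactly $\theta_{p,q} = c_n\,\Gamma(n+1)$. Combining with step (i) completes the proof.

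I expect the main obstacle to be step (ii): making the passage from $\square_b$ to its osculating Heisenberg model rigorous and uniform in the base point as $t\to 0^+$, i.e. the parametrix construction and its error bounds, which genuinely require the subelliptic estimates valid only in the range $0 < q < n-1$ and break down at $q=0$ or $q=n-1$. By contrast, the explicit Heisenberg computation in step (iii) is lengthy but routine once one has Mehler's formula and the Bargmann picture; the one delicate point there is getting the combinatorial factors $\binom{n-1}{p}\binom{n-1}{q}$ and the exponential shift $e^{-(n-1-2q)\tau}$ correct, which is a careful but mechanical bookkeeping of how $\dbarb$ acts across the antiholomorphic slots. Finally, checking the outcome on the sphere against Folland's explicit spectrum (as recalled in Section \ref{sec:ingredients}) gives a consistency test on all the constants.
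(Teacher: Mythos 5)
This theorem is not proved in the paper at all: it is quoted verbatim from \cite{StanTar} and used in Section~\ref{relating} only as a point of comparison with Theorem~\ref{main}, so there is no in-paper argument to measure your proposal against. Your outline --- reduce the Weyl asymptotics to the small-time heat trace of $\square_b$ on the complement of its kernel, osculate $M$ by the Heisenberg group, compute the model heat trace via the Bargmann representations and Mehler's formula, and finish with Karamata's theorem --- is precisely the strategy of Stanton and Tartakoff's original proof, and the bookkeeping you describe (the inert holomorphic slots giving $\binom{n-1}{p}$, the occupied antiholomorphic slots giving $\binom{n-1}{q}$ and the shift $e^{-(n-1-2q)\tau}$, the Plancherel measure and the substitution $\tau = 2\lambda t$ producing $t^{-n}$ times the stated integral) assembles the correct constant. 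Two caveats: what you have written is a plan rather than a proof, since the uniform parametrix construction with error control in your step (ii) --- which you rightly flag as the crux, and which is where the hypothesis $0<q<n-1$ (condition $Z(q)$) enters --- constitutes the bulk of the original paper and cannot be discharged by a one-sentence appeal to Rothschild--Stein; and the displayed asymptotic in the statement as transcribed here contains a typo, the right-hand side should be $c_n\,\textup{vol}(M)$ rather than $c_n\,\textup{vol}(M)\la^n$.
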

For spheres embedded in $\C^n$, the induced metric is a Levi metric (see Definition 1.5 and the following remark in \cite{StanTar}). However, as stated, this only applies to $(p,q)$ forms with $q\geq 1$. We analyze how this expression relates to Theorem \ref{main}. Towards this end, we define the function
\[f(q) = \binom{n-1}{q} \frac{\textup{vol}(\mathbb{S}^{2n-1})}{(2\pi)^{n} \Gamma(n+1)} \int_{-\infty}^\infty \left(\frac{\tau}{\sinh \tau}\right)^{n-1} e^{-(n-1-2q)\tau} \,d\tau,\]
which for $q=1,\dots,n-2$ is the leading coefficient on $\la^{n}$ for the asymptotic growth of $N(\la)$, the eigenvalue counting function of $\square_b$ on $M$ acting on $(0,q)$ forms. 

The following statement shows that this function is closely related to our formula.
\begin{thm}
    The definition of $f$ given above is convergent for complex $q$ satisfying $0 < \Re(q) < n-1$. Further, $f$ is holomorphic on this strip, and has an analytic continuation to a meromorphic function on the strip
    $-1 < \Re(q) < n-1$ whose only pole is at $q=0$. Finally, the Laurent expansion of $f$ about 0 is 
    \[f(q) = a_n/q^n +  \volSn \frac{(n-1)}{n (2\pi)^n \Gamma(n+1) }  \int_{-\infty}^\infty  \left( \frac{x}{\sinh(x)} \right)^n e^{-x(n-2)} dx,\] where $a_n \neq 0$. 
\end{thm}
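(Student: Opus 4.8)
The plan is to analyze the function
\[f(q) = \binom{n-1}{q} \frac{\volSn}{(2\pi)^{n} \Gamma(n+1)} \int_{-\infty}^\infty \left(\frac{\tau}{\sinh \tau}\right)^{n-1} e^{-(n-1-2q)\tau} \,d\tau\]
by separating it into its two factors: the binomial coefficient $\binom{n-1}{q} = \Gamma(n)/(\Gamma(q+1)\Gamma(n-q))$, which extends to an entire function of $q$ (it has no poles since $\Gamma(q+1)$ and $\Gamma(n-q)$ have no zeros), and the integral factor, call it $I(q)$. The first task is to understand where $I(q)$ converges. For large positive $\tau$ the integrand behaves like $\tau^{n-1} 2^{n-1} e^{-(n-1)\tau} e^{-(n-1-2q)\tau} = 2^{n-1}\tau^{n-1} e^{-2(n-1-q)\tau}$, which decays iff $\Re(q) < n-1$; for large negative $\tau$, writing $\tau = -s$, the integrand behaves like $2^{n-1} s^{n-1} e^{-2q s}$ (up to sign), which decays iff $\Re(q) > 0$. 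Near $\tau = 0$ the integrand is bounded since $\tau/\sinh\tau \to 1$. So $I(q)$ converges for $0 < \Re(q) < n-1$, and holomorphy on this strip follows from the standard Morera/differentiation-under-the-integral argument (dominated locally uniformly by the same tail bounds). This gives the first two claims: $f$ is holomorphic on $0 < \Re(q) < n-1$.

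Next I would produce the meromorphic continuation to $-1 < \Re(q) < n-1$. The only obstruction to convergence as $\Re(q) \to 0^+$ comes from the $\tau \to -\infty$ tail, so I would split $I(q) = \int_{-\infty}^{0} + \int_{0}^{\infty}$; the second piece is already holomorphic for all $\Re(q) < n - 1$. For the first piece, substitute $\tau = -s$ and isolate the bad behavior by writing $\left(\tfrac{s}{\sinh s}\right)^{n-1} e^{-(n-1-2q)(-s)} = h(s) e^{-2qs}$ where $h(s) = (s/\sinh s)^{n-1} e^{(n-1)s}$; note $h(s) \to 2^{n-1}$ as $s \to \infty$ and $h$ is smooth and bounded on $[0,\infty)$ (decaying like $s^{n-1}2^{n-1}e^{-(n-1)s}$ is false — rather it tends to the constant $2^{n-1}$, so I subtract that constant). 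Writing $h(s) = 2^{n-1} + (h(s) - 2^{n-1})$ with $h(s) - 2^{n-1}$ decaying exponentially, we get
\[\int_0^\infty h(s) e^{-2qs}\,ds = \frac{2^{n-1}}{2q} + \int_0^\infty (h(s) - 2^{n-1}) e^{-2qs}\,ds,\]
where the remaining integral converges and is holomorphic for $\Re(q) > -\delta$ for some $\delta > 0$ (in fact for $\Re(q) > -(n-1)/2$, ample for the strip $-1 < \Re(q) < n-1$). Thus $I(q)$ continues meromorphically with a single simple pole at $q = 0$ of residue $2^{n-1}/2 = 2^{n-2}$, and since $\binom{n-1}{q}$ is holomorphic and nonzero at $q=0$ (equal to $1$), $f$ has a simple pole at $q=0$ and no other poles in the strip.

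Finally, for the Laurent expansion about $0$: write $f(q) = \binom{n-1}{q}\cdot \frac{\volSn}{(2\pi)^n\Gamma(n+1)}\cdot I(q)$ and expand each factor to first order. We have $\binom{n-1}{q} = 1 + c q + O(q^2)$ for some constant $c$, and $I(q) = \tfrac{2^{n-2}}{q} + I_0 + O(q)$ where $I_0$ is the constant term; multiplying, the $1/q$ coefficient is $a_n := \frac{\volSn}{(2\pi)^n\Gamma(n+1)} \cdot 2^{n-2} \neq 0$, and the constant term of $f$ is $\frac{\volSn}{(2\pi)^n\Gamma(n+1)}(I_0 + 2^{n-2} c)$. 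The content of the theorem is that this constant term equals $\volSn \frac{(n-1)}{n(2\pi)^n\Gamma(n+1)}\int_{-\infty}^\infty (x/\sinh x)^n e^{-x(n-2)}\,dx$, the leading coefficient from Theorem \ref{main}. The cleanest route is to differentiate under the integral sign: since $\frac{d}{dq} e^{-(n-1-2q)\tau} = 2\tau e^{-(n-1-2q)\tau}$, we get a relation between $I'(q)$ and an integral with an extra factor of $\tau$; combined with the explicit $\binom{n-1}{q}$ derivative at $q=0$ and an integration by parts (mirroring the one performed in Section \ref{mainsec} that converted $\int x^{n-1}(\cdots)dx$ into $\int x^n \cosh((n-2)x)/\sinh(x)^n\,dx$), one matches the constant term to the Theorem \ref{main} integral. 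The main obstacle I expect is this last bookkeeping step: carefully extracting the finite part of $I(q)$ at $q=0$ and reconciling it — via integration by parts and the parity of $x/\sinh x$ — with the Section \ref{mainsec} expression, since the pole subtraction $2^{n-1}/(2q)$ interacts with the $q$-derivative of the binomial coefficient and one must track constants through the integration by parts exactly as in the proof of Theorem \ref{main}. I would organize the computation so that the $q \to 0$ limit of $q \cdot f(q)$ gives $a_n$ immediately, and then compute $\lim_{q\to 0}\bigl(f(q) - a_n/q\bigr)$ by L'Hôpital-type expansion, arranging the algebra to land precisely on the displayed integral.
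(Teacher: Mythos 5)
Your convergence and holomorphy analysis on the strip $0 < \Re(q) < n-1$ is sound and matches the paper's. But the meromorphic continuation step contains a fatal error that contradicts both the theorem's own statement and your own asymptotic estimate from the previous paragraph. You write $h(s) = (s/\sinh s)^{n-1}e^{(n-1)s}$ and claim $h(s) \to 2^{n-1}$ as $s \to \infty$. In fact $\sinh s = \tfrac{1}{2}e^s(1-e^{-2s})$, so
\[
h(s) = \left(\frac{2s e^{-s}}{1-e^{-2s}}\right)^{n-1} e^{(n-1)s} = \frac{2^{n-1}s^{n-1}}{(1-e^{-2s})^{n-1}} \sim 2^{n-1}s^{n-1},
\]
which grows polynomially — exactly the $2^{n-1}s^{n-1}e^{-2qs}$ behavior you yourself recorded when checking convergence. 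Subtracting the constant $2^{n-1}$ therefore does \emph{not} render $\int_0^\infty (h(s)-2^{n-1})e^{-2qs}\,ds$ convergent near $q=0$; one must subtract $2^{n-1}s^{n-1}$, and $\int_0^\infty 2^{n-1}s^{n-1}e^{-2qs}\,ds = 2^{n-1}\Gamma(n)/(2q)^n$ produces a pole of order $n$, not a simple pole. This is why the theorem says $a_n/q^n$, not $a_n/q$ — you appear to have misread that exponent, since your computation of $a_n$ is laid out as if it were the residue of a simple pole.

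The paper's construction corresponds to the correct subtraction: after symmetrizing $f$ to an integral over $(0,\infty)$ against $\cosh((m-2q)\tau)$ with $m=n-1$, it defines $g$ by subtracting $\tau^m \cdot 2^{m-1}e^{-2q\tau}$ inside the integrand, so that $f-g \propto \binom{m}{q}\,q^{-(m+1)}$ is the order-$n$ pole and $g$ is holomorphic across $q=0$. The constant term is then read off as $g(0)$, which is manipulated (via $\cosh(m\tau)=\tfrac12(e^{m\tau}+e^{-m\tau})$) into the expression $\frac{1}{\Gamma(n+1)}\lim_{t\to 0^+}t^nG(t)$ appearing after Proposition \ref{Gt} — not via a $q$-derivative and L'H\^opital as you sketch. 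Your final bookkeeping paragraph is too vague to rescue the argument once the pole order is corrected, because once the subtraction is $2^{n-1}s^{n-1}$ rather than $2^{n-1}$, the finite part is not merely the naive "$I_0 + 2^{n-2}c$" you wrote; it must be matched against the specific combination of integrals from Section \ref{mainsec}. To repair the proposal, replace the constant subtraction by the degree-$(n-1)$ polynomial subtraction and identify the resulting $q=0$ value with the integral $\frac{1}{(n-1)!}\int_0^\infty \tau^{n-1}\bigl(\tfrac{1}{(1-e^{-2\tau})^{n-1}}-1+\tfrac{1}{(e^{2\tau}-1)^{n-1}}\bigr)d\tau$, exactly as the paper does.
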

In other words, the constant term in the Laurent expansion of $f$ about 0 is the expression from Theorem \ref{main}.

\begin{proof}
For notational convenience let $m = n - 1$. Then we may write $f$ as
\[f(q) = \binom{m}{q} \frac{\textup{vol}(\mathbb{S}^{2n-1
})}{(2\pi)^{n} \Gamma(n+1)} \int_{-\infty}^\infty \left(\frac{\tau}{\sinh \tau}\right)^{m} e^{-(m-2q)\tau} \,d\tau. \] We first prove that the integrand is integrable ($L^1$) whenever $0 < \Re(q) < m$. For this, 
choose $C > 0$ and $\alpha > 0$ so that whenever $|x| \geq C$, $|\sinh(x)| \geq \alpha e^{|x|}$. Therefore for $|x| \geq C$ we have
\[\frac{x}{\sinh(x)} = \left|\frac{x}{\sinh(x)}\right| \leq \frac{|x|}{\alpha e^{|x|}}.\]
Since $x/\sinh(x)$ has a removable singularity at 0, it is continuous and thus bounded on $[-C,C].$ Hence $x^m/\sinh(x)^m$ is bounded on $[-C,C]$ as well, so we can choose $D$ so that $x^m/\sinh(x)^m \leq D$ whenever $|x| \leq C$. Thus, we have the bound
\[\frac{x^m}{\sinh(x)^m} \leq \frac{|x|^m}{\alpha^m e^{m |x|}} \ind_{|x| \geq C} + D \ind_{|x| \leq C} \] for all $x \in \R$.

Therefore, we may write
\begin{align*}
    \int_{-\infty}^\infty \left| \left(\frac{\tau}{\sinh \tau}\right)^{m} e^{-(m-2q)\tau} \right| \,&d\tau =
    \int_{-\infty}^\infty \left(\frac{\tau}{\sinh \tau}\right)^{m} e^{-(m-2\Re(q))\tau} \,d\tau \\
    &\leq \alpha^{-m}\int_{-\infty}^C |\tau|^m e^{-m|\tau| - (m-2\Re(q))\tau} \, d\tau + \alpha^{-m} \int_C^{\infty} \tau^m e^{-m\tau - (m-2\Re(q))\tau} \, d\tau + \int_{-C}^{C} D \, d\tau \\
    &= \alpha^{-m}\int_{C}^\infty \tau^m e^{-m\tau + (m-2\Re(q))\tau} \, d\tau + \alpha^{-m} \int_C^{\infty} \tau^m e^{-m\tau - (m-2\Re(q))\tau} \, d\tau + 2CD \\
    &= \alpha^{-m}\int_C^\infty \tau^m e^{-2 \tau \Re(q)} \, d\tau + \alpha^{-m} \int_C^{\infty} \tau^m e^{-2\tau(m-\Re(q))} \, d\tau + 2CD.
\end{align*}
A repeated integration by parts shows that these integrals are finite if $0 < \Re(q) < m.$ 
To define the binomial coefficient, use the gamma function, i.e.
\[\binom{m}{q} = \frac{m!}{\Gamma(q+1)\Gamma(m-q+1)}, \]
which is defined since $\Gamma$ has no zeros. Thus, $f$ is well defined on its domain of definition.

Now, rewrite $f$ as follows:
\begin{align*}
f(q) &= \binom{m}{q} \frac{\textup{vol}(\mathbb{S}^{2n-1
})}{(2\pi)^{n} \Gamma(n+1)} \int_{-\infty}^\infty \left(\frac{\tau}{\sinh \tau}\right)^{m} e^{-(m-2q)\tau} \,d\tau \\
&= \frac{1}{2} \binom{m}{q} \frac{\textup{vol}(\mathbb{S}^{2n-1
})}{(2\pi)^{n} \Gamma(n+1)} \left( \int_{-\infty}^\infty \left(\frac{\tau}{\sinh \tau}\right)^{m} e^{-(m-2q)\tau} \,d\tau + \int_{-\infty}^\infty\left(\frac{\tau}{\sinh \tau}\right)^{m} e^{-(m-2q)\tau} \,d\tau \right).
\end{align*}
As $\frac{x}{\sinh(x)}$ is an even function, the value of the second integral is unchanged if we change the $e^{-(m-2q)\tau}$ in the integrand to $e^{(m-2q)\tau}$. Therefore,
\begin{align*}
f(q) &= \frac{1}{2}\binom{m}{q} \frac{\textup{vol}(\mathbb{S}^{2n-1
})}{(2\pi)^{n} \Gamma(n+1)} \left( \int_{-\infty}^\infty \left(\frac{\tau}{\sinh \tau}\right)^{m} e^{-(m-2q)\tau} \,d\tau + \int_{-\infty}^{\infty} \left(\frac{\tau}{\sinh \tau}\right)^{m} e^{(m-2q)\tau} \,d\tau \right) \\
&= \binom{m}{q} \frac{\textup{vol}(\mathbb{S}^{2n-1
})}{(2\pi)^{n} \Gamma(n+1)} \int_{-\infty}^\infty \left(\frac{\tau}{\sinh \tau}\right)^{m} \cosh((m-2q)\tau) \,d\tau \\
&= 2 \binom{m}{q} \frac{\textup{vol}(\mathbb{S}^{2n-1
})}{(2\pi)^{n} \Gamma(n+1)} \int_{0}^\infty \left(\frac{\tau}{\sinh \tau}\right)^{m} \cosh((m-2q)\tau) \,d\tau
\end{align*}
where the last step follows since the integrand is even. Now, define the function $g$ as follows:
\[
g(q) = 2 \binom{m}{q} \frac{\textup{vol}(\mathbb{S}^{2n-1})}{(2\pi)^{n} \Gamma(n+1)} \int_{0}^{\infty} \tau^m \left(\frac{\cosh((m-2q)\tau))}{(\sinh \tau)^m} - 2^{m-1} e^{-2q\tau} \right)  \,d\tau.
\]
We claim that $g$ is holomorphic on the strip $-1 < \Re(q) < m.$ Assuming this for now, consider 
\[ f(q) - g(q) = 2^m \binom{m}{q} \frac{\textup{vol}(\mathbb{S}^{2n-1})}{(2\pi)^{n} \Gamma(n+1)} \int_{0}^{\infty}  \tau^m  e^{-2q\tau}  \,d\tau \]
defined for $0 < \Re(q) < m$. This is easy to evaluate explicitly with the substitution $u = 2q\tau:$
\[
\int_{0}^{\infty} 
\tau^m e^{-2q\tau}  \,d\tau = 
 \frac{1}{(2q)^m}\int_{0}^{\infty} u^m  e^{-u}\,du = \frac{\Gamma(m+1)}{(2q)^m}
\]
so
\[ f(q) - g(q) = \binom{m}{q} \frac{\textup{vol}(\mathbb{S}^{2n-1})}{(2\pi)^{n}(m+1)} \frac{1}{q^{m+1}} = \binom{n-1}{q} \frac{\textup{vol}(\mathbb{S}^{2n-1})}{n(2\pi)^{n}} \frac{1}{q^{n}}.\]
This is meromorphic in $q$ on the whole complex plane. Thus, the function $g + (f - g)$ is meromorphic in $q$ for $-1 < \Re(q) < n-1$, and equal to $f$ if $0 < \Re(q) < n-1$. Since $f$ converges on its domain of definition, $f$ is holomorphic in $q$, and $g + (f -g)$ is the desired continuation. To complete the proof, we need to show that $g(0)$ is our formula. We have
\begin{align*}
g(0) &= 2 \frac{\textup{vol}(\mathbb{S}^{2n-1})}{(2\pi)^{n} \Gamma(n+1)} \int_{0}^{\infty} \tau^m \left(\frac{\cosh(m\tau)}{(\sinh \tau)^m} - 2^{m-1} \right)  \,d\tau \\
&= 2 \frac{2\pi^n/\Gamma(n)}{(2\pi)^{n} \Gamma(n+1)} \int_{0}^{\infty} \tau^m \left(\frac{e^{m\tau} + e^{-m\tau}}{2^{m-1}(e^\tau - e^{-\tau})^m} - 2^{m-1} \right)  \,d\tau \\
&= \frac{1}{\Gamma(n)\Gamma(n+1)} \int_{0}^{\infty} \tau^m \left(\frac{e^{m\tau} + e^{-m\tau}}{(e^\tau - e^{-\tau})^m} - 1 \right)  \,d\tau \\
&= \frac{1}{(n-1)! \,\Gamma(n+1)} \int_{0}^{\infty} \tau^m \left(\frac{e^{m\tau}}{(e^\tau - e^{-\tau})^m} + \frac{e^{-m\tau}}{(e^\tau - e^{-\tau})^m} - 1 \right)  \,d\tau \\
&= \frac{1}{(n-1)! \, \Gamma(n+1)} \int_{0}^{\infty} \tau^m \left(\frac{1}{(1 - e^{-2\tau})^m} + \frac{1}{(e^{2\tau} - 1)^m} - 1 \right)  \,d\tau \\
&= \frac{1}{\Gamma(n+1)} \lim_{t \to 0} t^n G(t) \\
&= \lim_{\lambda \to \infty} \frac{N(\lambda)}{\lambda^n}
\end{align*}
where we have used the expression appearing in the discussion after Proposition \ref{Gt}, and Karamata's Theorem. Thus we have our theorem, modulo showing that $g$ is holomorphic for $-1 < \Re(q) < m$. We move on to this now. It clearly suffices to show that the expression
\[
h(q) = \int_{0}^{\infty} \tau^m \left(\frac{\cosh((m-2q)\tau))}{(\sinh \tau)^m} - 2^{m-1} e^{-2q\tau} \right)  \,d\tau
\]
is holomorphic for $-1 < \Re(q) < m$. 

As a stepping stone towards proving that $h(q)$ is holomorphic, we will first prove that for $\beta>0$, 
 \[ \phi(\beta) = \int_0^{\infty} e^{-2\beta \tau} \left(\frac{\tau}{1 - e^{-2\tau}}\right)^m d\tau\] is convergent and continuous.
By differentiating with respect to $\beta$, we see that the integrand is motonically increasing, for each $\tau$, as $\beta \to 0.$ Fix some $\beta_0 > 0$. Now, for $\tau$ near 0, the integrand is bounded since it has a limit at 0. For large $\tau$, it is bounded by a constant times
$e^{-2 \beta \tau} \tau^m$, which is integrable on $[0,\infty).$ To see continuity at $\beta_0$, fix some $\beta_1$ with $0 < \beta_1 < \beta_0$, and note that the integrand at $\beta_1$ dominates the integrand at $\beta$ for all $\beta > \beta_1$. Continuity of $\phi$ at $\beta_0$ follows then by dominated convergence. Thus, we have shown that $\phi$ is convergent and continuous for $\beta>0$.

Next, we compute
\[
    \int_{0}^{\infty} \left| \tau^m \left(\frac{\cosh((m-2q)\tau))}{(\sinh \tau)^m} - 2^{m-1} e^{-2q\tau} \right) \right| \,d\tau 
    = 2^{m-1}\int_{0}^{\infty} \tau^m \left|\frac{e^{(m-2q)\tau} + e^{-(m-2q)\tau}}{(e^\tau - e^{-\tau})^m} - e^{-2q\tau} \right| \,d\tau
    \]
 \[   \leq 2^{m-1}\int_{0}^{\infty} \tau^m \left|\frac{ e^{-(m-2q)\tau}}{(e^\tau - e^{-\tau})^m} \right| \,d\tau + 2^{m-1}\int_{0}^{\infty} \tau^m \left|\frac{e^{(m-2q)\tau} }{(e^\tau - e^{-\tau})^m} - e^{-2q\tau} \right| \,d\tau.
\]
To analyze the first term, we have
\[ \int_{0}^{\infty} \tau^m \left|\frac{ e^{-(m-2q)\tau}}{(e^\tau - e^{-\tau})^m} \right| \,d\tau = \int_{0}^{\infty} \tau^m \frac{ e^{-(m-2\Re(q))\tau}}{(e^\tau - e^{-\tau})^m} \,d\tau = 
\int_{0}^{\infty} \tau^m \frac{ e^{-(2m-2\Re(q))\tau}}{(1 - e^{-2\tau})^m} \,d\tau = \phi(m - \Re(q)).
\]
For the second term, we have
\[\int_{0}^{\infty} \tau^m \left|\frac{e^{(m-2q)\tau} }{(e^\tau - e^{-\tau})^m} - e^{-2q\tau} \right| \,d\tau = \int_{0}^{\infty} \tau^m \left|e^{-2q\tau} \left(\frac{e^{m\tau} }{(e^\tau - e^{-\tau})^m} - 1\right) \right| \,d\tau = \int_{0}^{\infty} \tau^m e^{-2\Re(q)\tau} \left| \frac{1}{(1 - e^{-2\tau})^m} - 1 \right| \,d\tau.
\]
We bound the latter expression in the integrand by
\[\left| \frac{1}{(1 - e^{-2\tau})^m} - 1 \right| = \left| \frac{1 - (1- e^{-2\tau})^m}{(1 - e^{-2\tau})^m} \right| = \left| \frac{1 - \sum_{k=0}^m (-1)^k \binom{m}{k} e^{-2k\tau}}{(1 - e^{-2\tau})^m} \right| = \left| \frac{- \sum_{k=1}^m (-1)^k \binom{m}{k} e^{-2k\tau}}{(1 - e^{-2\tau})^m} \right|\]
\[ \leq \frac{ \sum_{k=1}^m \binom{m}{k} e^{-2k\tau}}{(1 - e^{-2\tau})^m}
= e^{-2\tau} \frac{ \sum_{k=1}^m \binom{m}{k} e^{-2(k-1)\tau}}{(1 - e^{-2\tau})^m} \leq e^{-2\tau} \frac{ \sum_{k=1}^m \binom{m}{k}}{(1 - e^{-2\tau})^m} \leq e^{-2\tau}\frac{ 2^m}{(1 - e^{-2\tau})^m}.\]
Thus for the second term we have
\[\int_{0}^{\infty} \tau^m \left|\frac{e^{(m-2q)\tau} }{(e^\tau - e^{-\tau})^m} - e^{-2q\tau} \right| \,d\tau \leq 
\int_{0}^{\infty} \tau^m e^{-2\Re(q)\tau} e^{-2\tau}\frac{ 2^m}{(1 - e^{-2\tau})^m} = 2^m \phi(\Re(q) + 1).\]
In total we have shown that 
\[\int_{0}^{\infty} \left| \tau^m \left(\frac{\cosh((m-2q)\tau))}{(\sinh \tau)^m} - 2^{m-1} e^{-2q\tau} \right) \right| \,d\tau \leq 2^{m-1} \phi(m - \Re(q)) + 2^{2m-1} \phi(\Re(q) + 1). \]
Using this bound, we now show that $h$ is holomorphic via Morera's Theorem. Fix some triangle $\Delta \subset \{q \in \C: -1 < \Re(q) < m\}.$ Parameterize $\Delta$ by arc length with the piecewise differentiable curve $\gamma(t)$, $a \leq t \leq b$ (so $|\gamma'(t)| = 1 )$ for all $t$). Then 
$$\int_{\Delta} h(q) \,dq = \int_a^b \int_{0}^\infty h(\gamma(t)) \gamma'(t) \,d\tau \,dt.$$
The estimate above, 
\[\int_{0}^{\infty} \left| \tau^m \left(\frac{\cosh((m-2 \gamma(t) )\tau))}{(\sinh \tau)^m} - 2^{m-1} e^{-2 \gamma(t) \tau} \right) \gamma'(t) \right| \,d\tau \leq 2^{m-1} \phi(m - \Re(\gamma(t))) + 2^{2m-1} \phi(\Re(\gamma(t)) + 1),\]
is uniformly bounded in $t$ by the compactness of $\Delta$ and the continuity of the upper bound in $t$, so \[
\int_a^b \int_{0}^\infty \left| \tau^m \left(\frac{\cosh((m-2 \gamma(t) )\tau))}{(\sinh \tau)^m} - 2^{m-1} e^{-2 \gamma(t) \tau} \right) \gamma'(t) \right| \,d\tau \,dt < \infty. \] 
Therefore we may apply Fubini's Theorem to see that 
\[ \int_a^b \int_{0}^\infty  \tau^m \left(\frac{\cosh((m-2 \gamma(t) )\tau))}{(\sinh \tau)^m} - 2^{m-1} e^{-2 \gamma(t) \tau} \right) \gamma'(t)  \,d\tau \,dt = \] 
\[\int_{0}^{\infty} \int_{a}^b  \tau^m \left(\frac{\cosh((m-2 \gamma(t) )\tau))}{(\sinh \tau)^m} - 2^{m-1} e^{-2 \gamma(t) \tau} \right) \gamma'(t) \,dt \,d\tau = \int_{0}^{\infty} 0 \, d\tau = 0\]
by Cauchy's Theorem. This shows (by Morera's Theorem) that 
\[
h(q) = \int_{0}^{\infty} \tau^m \left(\frac{\cosh((m-2q)\tau))}{(\sinh \tau)^m} - 2^{m-1} e^{-2q\tau} \right)  \,d\tau
\]
is a holomorphic function of $q$ for $-1 < \Re(q) < m$, and thus $g(q)$ is holomorphic for $-1 < \Re(q) < m$, completing the proof.
\end{proof}

\section*{Acknowledgements} 
We would like to thank Mohit Bansil and Michael Dabkowski for careful comments on an earlier version of this paper. This research was conducted at the NSF REU Site (DMS-1950102, DMS-1659203) in Mathematical Analysis and Applications at the University of Michigan-Dearborn. We would like to thank the National Science Foundation, National Security Agency, and University of Michigan-Dearborn for their support. 

\newcommand{\etalchar}[1]{$^{#1}$}

\end{document}